\documentclass[11pt]{amsart}
\usepackage{palatino}
\usepackage{amsfonts,amscd}
\usepackage{graphicx}
\usepackage{amsmath,latexsym,amssymb,amsthm}
\usepackage{hyperref}
\usepackage{cite}
\usepackage{lscape,fancyhdr}
\usepackage{a4}

\textwidth 6.5in \textheight 9in \oddsidemargin0.00in
\evensidemargin0.00in

\newcounter{cnt1}
\newcounter{cnt2}
\newcounter{cnt3}
\newcommand{\blr}{\begin{list}{$($\roman{cnt1}$)$} {\usecounter{cnt1}
        \setlength{\topsep}{0pt} \setlength{\itemsep}{0pt}}}
\newcommand{\bla}{\begin{list}{$($\alph{cnt2}$)$} {\usecounter{cnt2}
        \setlength{\topsep}{0pt} \setlength{\itemsep}{0pt}}}
\newcommand{\bln}{\begin{list}{$($\arabic{cnt3}$)$} {\usecounter{cnt3}
                \setlength{\topsep}{0pt} \setlength{\itemsep}{0pt}}}
\newcommand{\el}{\end{list}}

\newtheorem{Thm}{Theorem}[section]

\newtheorem{Prop}[Thm]{Proposition}
\newtheorem{Def}[Thm]{Definition}
\newtheorem{Exm}[Thm]{Example}
\newtheorem{Rem}[Thm]{Remark}

\begin{document}

\title{Hom-Lie-Rinehart Algebras }
\author{Ashis Mandal and Satyendra Kumar Mishra}
\footnote{The research of S.K. Mishra is supported by CSIR-SPM fellowship grant 2013 .}

\begin{abstract}
We introduce hom-Lie-Rinehart algebras as an algebraic analogue of hom-Lie algebroids, and systematically  describe a cohomology complex by considering coefficient modules. We define the notion of extensions for hom-Lie-Rinehart algebras. In the sequel, we deduce a characterisation of low dimensional cohomology spaces in terms of the  group of automorphisms of certain abelian extension and the equivalence classes of those abelian extensions in the category of hom-Lie-Rinehart algebras, respectively.  We also construct a canonical example of hom-Lie-Rinehart algebra associated to a given Poisson algebra and an automorphism.
\end{abstract}
\footnote{AMS Mathematics Subject Classification (2010): $17$A$32, $ $17$B$66$, $53$D$17$. }
\keywords{ Lie- Rinehart algebras, cohomology of Lie- Rinehart algebras, Lie algebroids, Hom-algebras }
\maketitle
\section{Introduction}
The notion of Lie-Rinehart algebra plays an important role in many branches of mathematics. The idea of this notion goes back to the work of N. Jacobson to study certain field extensions. It is also appeared in some different names in several areas which includes differential geometry and differential Galois theory. In \cite{Mackenzie}, K. Mackenzie provided a list of fourteen different terms mentioned for this notion. Here we follow the term Lie-Rinehart algebra, which is due to J. Huebschmann. He viewed Lie-Rinehart algebras as an  algebraic counterpart of Lie algebroids defined over smooth manifolds.  His work on several aspects of this algebra is developed systematically through a series of articles namely \cite{Hueb1, Hueb2, Hueb3, Hueb4}.

There is a growing interest in twisted algebraic structures or hom-algebraic structures defined for classical algebras and Lie algebroids as well. In the following years, by considering a vector space and an endomorphism of it, the corresponding hom- algebraic structures are introduced for various classical algebras. The first appearance of hom-algebra was the notion of  hom-Lie algebra, in the context of some particular deformation called $ q$-deformations of Witt and Virasoro algebra of vector fields. The work of J. Hartwig, D. Larsson and S. Silvestrov defined the notion of hom-Lie algebras to describe the $q$-deformations using  $\sigma$-derivations in place of usual derivation (\cite{HLIE01}). In the sequel, many concepts and properties have been extended to this framework of hom-structures. The study of hom-associative algebras, hom-Poisson algebras, Non-commutative hom-Poisson algebras, hom-Leibniz algebras and most of the results analogous to the classical algebras followed in the works of J. Hartwig, D. Larsson, A. Makhlouf, S. Silvestrov, D. Yau and other authors (\cite{HLIE01}, \cite{Sheng}, \cite{HALG}, \cite{HALG2}, \cite{DefHLIE}, \cite{NtHOM} ). Moreover, O. Elchinger introduced the quantization of Hom-Poisson structures in his thesis \cite{Oliver}.

C. Laurent-Gengoux and J. Teles introduced hom-Lie algebroid in \cite{hom-Lie}, where they also mentioned that the definition is not very straightforward to figure out from the classical notion of Lie algebroids so that the corresponding hom-version can be described in a systematic manner. First, they follow the classical case, one-to-one correspondence between  Lie algebroids structures on a vector bundle $A$ over a smooth manifold $M$ and Gerstenhaber algebras structures on the exterior algebra of multisections $\Gamma(\wedge^{*}A)$.  This makes natural the idea of defining hom-Lie algebroids through a formulation of hom-Gerstenhaber algebra.
 On the other hand, there are canonically defined adjoint functors between the category of Lie-Rinehart algebras and the category of Gerstenhaber algebras (see \cite{Gersh} for details).  
 
In this paper, we define hom-Lie-Rinehart algebras as an algebraic analogue of hom-Lie algebroids, and  prove that there are canonical adjoint functors between the category of hom-Gerstenhaber algebras and the category of hom-Lie-Rinehart algebras. We consider modules over a hom-Lie-Rinehart algebra and also describe a cohomology complex by considering coefficient modules. We define extensions of hom-Lie-Rinehart algebras and show the correspondence with lower dimensional cohomology spaces as one can expect (in an analogy with the classical algebras). This discussion not only provides a notion of cohomology and its interpretation for a more generalised notion of hom-Lie-Rinehart algebras but also it shed light on important special cases like hom-Lie algebras, Lie-Rinehart algebras, hom-Lie algebroids and hom-Gerstenhaber algebras. For instance, by considering hom-Lie algebra in the category of hom-Lie-Rinehart algebras one can view the deformation cohomology of hom-Lie algebras (\cite{NtHOM}), deduce the correspondence with low dimensional cohomology spaces and certain class of extensions. Also, the cohomology for Lie-Rinehart algebras and the characterisation of extensions can also be deduced to the one already present in the existing literature \cite{Hueb1}. Algebraic properties of the left modules over hom-Lie-Rinehart algebra motivates the definition of a representation of a hom-Lie algebroid, which is a generalization of the known representation for Lie algebroids. A canonical hom-Gerstenhaber algebra structure associated to a hom-Lie algebra is given in \cite{hom-Lie} (for details see Example \ref{Hom-Ger2} in the next section). If we use the boundary operator of a hom-Lie algebra complex with trivial coefficients defined in \cite{HALG2}, then we find that this operator generates the hom-Gerstenhaber bracket given in the Example \ref{Hom-Ger2}. Therefore it motivates the formulation of an exact hom-Gerstenhaber algebras or hom-Batalin-Vilkovisky algebras.

The paper is organized as follows:
In Section $2$, we recall some preliminaries on hom- algebras and fix  notations which we follow in the later part of the paper. We define homomorphisms of hom-Gerstenhaber algebras to form the category of hom-Gerstenhaber algebras. In Section $3$, we introduce the notion of  hom-Lie-Rinehart algebras and present various natural examples of this notion. This includes some of those examples are arising from the  hom-structures known in the literature. Next we consider homomorphisms in order to form the  category of hom-Lie-Rinehart algebras. In a sequel, we show that there are canonically defined adjoint functors from the category of hom-Lie-Rinehart algebras to the category of hom-Gerstenhaber algebras. The notion of a module for a hom-Lie-Rinehart algebra appeared in Section $4$, and subsequently we introduce a cochain complex and cohomology of a hom-Lie- Rinehart algebra with coefficients in a module. In Section $5$, we consider extensions of a hom-Lie- Rinehart algebras and characterise the first and second cohomology spaces in terms of the group of automorphisms of an $A$-split abelian extension and the equivalence classes of $A$-split abelian extensions in the category of hom-Lie Rinehart algebras, respectively. Furthermore, central extensions of a hom-Lie-Rinehart algebra are also defined.  In Section $6$, we describe Hom-Lie-Rinehart algebras canonically associated with a given Poisson algebra equipped with a Poisson algebra automorphism. In the last section we present a discussion of some special cases which also shows the wide interests and further application of hom-Lie-Rinehart algebras.

\section{Preliminaries on hom-algebras}
In this section, we shall recall basic definitions concerning hom-algebra structure from the literature (\cite{HLIE01, Sheng, HALG, HALG2, DefHLIE, NtHOM}) in order to  fix notation and terminology will be needed throughout the paper.

 Let $R$ denote a commutative ring with unity and $\mathbb{Z}_+$ be the set of all non-negative integers. We will consider all modules, algebras and their tensor products over such a ring $R$ and all linear maps to be $R$-linear unless otherwise stated. 
\begin{Def}
A hom-Lie algebra is a triplet $(\mathfrak{g},[-,-],\alpha)$ where $\mathfrak{g}$ is an $R$-module equipped with a skew-symmetric $R$-bilinear map $[-,-]:\mathfrak{g}\times \mathfrak{g}\rightarrow \mathfrak{g}$ and a linear map $\alpha:\mathfrak{g}\rightarrow \mathfrak{g}$, satisfying $\alpha[x,y]=[\alpha(x),\alpha(y)]$ such that the hom-Jacobi identity holds:
\begin{equation}
[\alpha(x),[y,z]]+[\alpha(y),[z,x]]+[\alpha(z),[x,y]]=0 ~~~~\mbox{for all}~~x,y,z\in \mathfrak{g}.
\end{equation}
Furthermore, if $\alpha$ is an automorphism of the $R$-module $\mathfrak{g}$, then the hom-Lie algebra $(\mathfrak{g},[-,-],\alpha)$ is called a regular hom-Lie algebra.
\end{Def}
\begin{Exm}
Given a Lie algebra $(\mathfrak{g},[-,-])$ with a Lie algebra homomorphism $\alpha:\mathfrak{g}\rightarrow \mathfrak{g}$, we can define a hom-Lie algebra as the triplet $(\mathfrak{g},\alpha\circ [-,-],\alpha )$. The hom-Jacobi identity for the new bracket $\alpha \circ [-,-]$ is equivalent to the Jacobi identity for Lie bracket $[-,-]$ ( restricted to  image of the map $\alpha \circ \alpha$). A hom-Lie  algebra of this kind is called a hom-Lie algebra obtained by composition.
\end{Exm}
\begin{Def}\label{Rep-hom-Lie}
A representation of a hom-Lie algebra $(\mathfrak{g},[-,-],\alpha)$ on a $R$-module $V$ is a pair $(\theta, \beta)$ of $R$-linear maps $\theta: \mathfrak{g}\rightarrow \mathfrak{g}l(V)$ and $\beta: V\rightarrow V$ such that 
\begin{equation}
\theta(\alpha(x))\circ \beta= \beta\circ \theta(x),
\end{equation}
\begin{equation}
\theta([x,y])\circ \beta= \theta(\alpha(x))\circ\theta(y)-\theta(\alpha(y))\circ\theta(x).
\end{equation}
for all $x,y \in \mathfrak{g}$.
\end{Def}
\begin{Exm}
 For any integer s, we can define the $\alpha^s$-adjoint representation of the regular hom-Lie algebra $(\mathfrak{g},[-,-],\alpha)$ on $\mathfrak{g}$ by $(ad_s,\alpha)$, where
$$ad_s(g)(h)=[\alpha^s(g),h]~~~\mbox{for all}~g,h\in \mathfrak{g}.$$
\end{Exm}
\begin{Def}
A graded hom-Lie algebra is a triplet $(\mathfrak{g},[-,-],\alpha)$ consisting of a graded module $\mathfrak{g}=\oplus_{i\in \mathbb{Z}_+}\mathfrak{g}_i$, a graded skew-symmetric bilinear map of degree $-1$ denoted by  $[-,-]:\mathfrak{g}\otimes\mathfrak{g}\rightarrow \mathfrak{g}$ and  $\alpha:\mathfrak{g}\rightarrow \mathfrak{g}$ is a homomorphism of $(\mathfrak{g},[-,-])$ of degree $0$ such that the following graded version of hom-Jacobi identity holds;
$$(-1)^{(i-1)(k-1)}[\alpha(x),[y,z]]+(-1)^{(j-1)(i-1)}[\alpha(y),[z,x]]+(-1)^{(k-1)(i-1)}[\alpha(z),[x,y]]=0,$$ 
 for all $x\in \mathfrak{g}_i,~y\in \mathfrak{g}_j,~k\in \mathfrak{g}_k$.
\end{Def}

\begin{Def}
A purely hom-Poisson algebra is a quadruple $(A,\mu,[-,-],\alpha)$ consisting of an $R$-module $A$, a bilinear map $\mu:A\otimes A\rightarrow A$ and a bilinear bracket $[-,-]:A\otimes A\rightarrow A$ such that following hold.
\begin{enumerate}
\item $(A,[-,-],\alpha)$ is a hom-Lie algebra;
\item $(A,\mu)$ is an associative commutative algebra;
\item $[x,\mu(y,z)]=\mu(\alpha(y),[x,z])+\mu(\alpha(z),[x,y]) ~~\mbox{for all}~ x,y,z\in A.$
\end{enumerate}
\end{Def}
\begin{Exm}
Given a Poisson algebra $(A,\mu,[-,-])$ and a Poisson algebra homomorphism $\alpha:A\rightarrow A$, the quadruple $(A,\mu,\alpha\circ[-,-],\alpha)$ is a purely hom-Poisson algebra. 
\end{Exm}
\begin{Def}
A Gerstenhaber algebra is a triplet  $(\mathcal{A}=\oplus_{i\in\mathcal{Z}_+}\mathcal{A}_i,\wedge,[-,-])$ where $\mathcal{A}$ is a graded commutative associative $R$-algebra, and $[-,-]:\mathcal{A}\otimes \mathcal{A}\rightarrow \mathcal{A}$ is a bilinear map of degree $-1$ such that:
\begin{enumerate}
\item $(\mathcal{A},[-,-])$ is a graded Lie algebra.
\item The following Leibniz rule holds:
      $$[X,Y\wedge Z]=[X,Y]\wedge Z+(-1)^{(i-1)j}Y\wedge [X,Z], $$
      for all $X\in \mathcal{A}_i,~Y\in \mathcal{A}_j,~Z\in \mathcal{A}_k$.
\end{enumerate}
\end{Def}
If $(\mathcal{A}=\bigoplus_{i\in \mathbb{Z}_+}\mathcal{A}_i,\bigwedge_1,[-,-]_{\mathcal{A}})$ and $(\mathcal{B}=\bigoplus_{i\in \mathbb{Z}_+}\mathcal{B}_i,\bigwedge_2,[-,-]_{\mathcal{B}})$ are Gerstenhaber algebras, then a $0$-degree $R$-linear map $\Theta: \mathcal{A}\rightarrow \mathcal{B}$ is a homomorphism of Gerstenhaber algebras if it satisfies:
\begin{enumerate}
\item $\Theta[X,Y]_{\mathcal{A}}=[\Theta(X),\Theta(Y)]_{\mathcal{B}} ~~\mbox{for all}~X\in \mathcal{A}_i, Y\in \mathcal{A}_j,$ and 
\item $\Theta(X\wedge_1 Y)=\Theta(X)\wedge_2 \Theta(Y)~~\mbox{for all}~X\in \mathcal{A}_i, Y\in \mathcal{A}_j.$
\end{enumerate}
We denote simply by $GR$, the category of Gerstenhaber algebras and the Gerstenhaber algebra homomorphisms. 

Recall that a Lie algebroid on a smooth manifold $M$ is a vector bundle $A$ over a smooth manifold $M$ with a vector bundle map $\rho: A\rightarrow TM$ called the anchor map, and a bilinear map denoted by the bracket $[-,-]: \Gamma A\otimes \Gamma A \rightarrow \Gamma A$ such that $(\Gamma A, [-,-])$ is a Lie-algebra and for all $X,Y\in \Gamma A$ and $f\in C^{\infty}(M)$ we have $[X,fY]=f [X,Y]+ \rho(X)(f) Y$.

Let $\Gamma (\wedge^*A)$ denote the algebra of multi-sections with standard wedge product. Then there is a one-to-one correspondence between Lie algebroid structures on $A$ and Gerstenhaber algebra structures on $\Gamma (\wedge^*A)$, where the graded associative commutative algebra structure on $\Gamma ( \wedge^*A)$ is given by the standard wedge product. 

Let $A$ be an associative commutative $R$-algebra and $Der(A)$ denote the space of $R$-derivations of the algebra $A$ . Then $Der(A)$ is simultaneously an $A$-module and a Lie algebra with the commutator bracket. 
\begin{Def}\label{Lie-Rin}
A Lie-Rinehart algebra $L$ over ( an associative commutative $R$-algebra ) $A$ is a Lie algebra  over $R$ with an $A$-module structure and a $R$-module homomorphism $\rho:L\rightarrow Der(A)$, such that 
$\rho$ is simultaneously an $A$-module homomorphism and a Lie $R$-algebra homomorphism and 
$$ [x, ay]=a[x,y]+\rho(x)(a)y~~\mbox{ for all}~ x,y\in L,~a\in A.$$
 \end{Def}
Let $A$ and $B$ be associative commutative $R$-algebras. Suppose $L$ and $L^{\prime}$ are Lie-Rinehart algebras over $A$ and $B$, respectively. Then a pair of maps $(g,f)$ is a homomorphism of the Lie-Rinehart algebras $L$ and $L^{\prime}$ if $g: A\rightarrow B$ is an algebra homomorphism and $f: L\rightarrow L^{\prime}$ is an $R$-module homomorphism such that following compatibility conditions hold.
\begin{enumerate}
\item $f(a.x)=g(a).f(x)~~\mbox{for all}~x\in L,~a\in A,$
\item $f[x,y]_L=[f(x),f(y)]_{L^{\prime}} ~~\mbox{for all}~x,y\in L,$
\item $g(\rho_L(x)(a))=\rho_{L^{\prime}}(f(x))(g(a))~~\mbox{for all}~x\in L,~ a\in A.$
\end{enumerate}  
We will denote by $LR$ the category of Lie-Rinehart algebras and the Lie-Rinehart algebra homomorphisms .
\begin{Thm}\label{EQ}
(Theorem 5, \cite{Gersh}) For a given Gerstenhaber algebra $(\mathcal{A}=\oplus_{i\in\mathcal{Z}_+}\mathcal{A}_i,\wedge,[-,-])$, we have a Lie-Rinehart algebra structure on $\mathcal{A}_1$ over the algebra $\mathcal{A}_0$. In fact, this assignment gives a functor $F:GR\rightarrow LR$. Similarly, we can define a functor $G:LR\rightarrow GR$ assigning a Lie- Rinehart algebra $\mathcal{A}_1$ over $\mathcal{A}_0$ to a Gerstenhaber algebra structure on $(\wedge^*_{\mathcal{A}_0}\mathcal{A}_1,\wedge,[-,-]_G)$, where $\wedge^*_{\mathcal{A}_0}\mathcal{A}_1$ is the exterior algebra of $\mathcal{A}_1$ over $\mathcal{A}_0$ and $[-,-]_G$ is the Sch$\ddot{o}$uten bracket:
$$[x_1\wedge..\wedge x_n,y_1\wedge..\wedge y_m]=(-1)^{(m-1)(n-1)} \sum(-1)^{i+j}[x_i,y_j]\wedge x_1\wedge..\wedge\hat{x_i}\wedge..\wedge x_n\wedge y_1\wedge..\wedge\hat{y_j}\wedge..\wedge y_m$$ 
where $x_i,y_j\in \mathcal{A}_1$ for all $1\leq i\leq n,~ 1\leq j\leq m$. The functor $G:LR\rightarrow GR$ is left adjoint to the functor $F:GR\rightarrow LR$. 
\end{Thm}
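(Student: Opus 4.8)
The plan is to construct the two functors separately and then obtain the adjunction from a universal property of the exterior algebra.

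For the functor $F:GR\rightarrow LR$, I would first exploit that $[-,-]$ has degree $-1$, so it restricts to maps $\mathcal{A}_1\otimes \mathcal{A}_1\rightarrow \mathcal{A}_1$ and $\mathcal{A}_1\otimes \mathcal{A}_0\rightarrow \mathcal{A}_0$. The first, together with the graded Lie axioms specialised to degree one (where all the signs are trivial), makes $(\mathcal{A}_1,[-,-])$ a Lie algebra over $R$, while the wedge product $\mathcal{A}_0\otimes \mathcal{A}_1\rightarrow \mathcal{A}_1$ makes $\mathcal{A}_1$ an $\mathcal{A}_0$-module. I would define the anchor $\rho:\mathcal{A}_1\rightarrow Der(\mathcal{A}_0)$ by $\rho(x)(a)=[x,a]$; that $\rho(x)$ is a derivation is exactly the Leibniz rule $[x,a\wedge b]=[x,a]\wedge b+a\wedge[x,b]$ with $x\in \mathcal{A}_1$ and $a,b\in \mathcal{A}_0$ (the sign $(-1)^{(i-1)j}$ disappears since $i=1$). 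The Lie-Rinehart identity $[x,ay]=a[x,y]+\rho(x)(a)y$ follows from the same Leibniz rule with $Y=a\in \mathcal{A}_0$ and $Z=y\in \mathcal{A}_1$, and the facts that $\rho$ is $\mathcal{A}_0$-linear and a Lie homomorphism follow from graded skew-symmetry and the graded Jacobi identity. Functoriality is then immediate: a Gerstenhaber homomorphism $\Theta$ restricts to the pair $(\Theta|_{\mathcal{A}_0},\Theta|_{\mathcal{A}_1})$, and the three Lie-Rinehart compatibility conditions are direct consequences of $\Theta$ preserving $\wedge$ and $[-,-]$.

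For the functor $G:LR\rightarrow GR$, given $L$ over $A$ I would form the exterior algebra $\wedge^*_A L$, graded commutative associative under $\wedge$, and equip it with the Schouten bracket $[-,-]_G$ of the statement. The real content is that $(\wedge^*_A L,\wedge,[-,-]_G)$ is a Gerstenhaber algebra, i.e. that $[-,-]_G$ is a well-defined degree $-1$ graded Lie bracket satisfying the Leibniz rule; this is the purely algebraic counterpart of the correspondence between Lie algebroid structures on $A$ and Gerstenhaber structures on $\Gamma(\wedge^*A)$ recalled above. I would check well-definedness over $A$ (so the formula descends to exterior powers), graded skew-symmetry, and the Leibniz rule directly from the formula, and then deduce the graded Jacobi identity by reducing to generators using that both sides are graded derivations in each slot. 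On morphisms, a Lie-Rinehart homomorphism $(g,f)$ extends by $x_1\wedge\cdots\wedge x_n\mapsto f(x_1)\wedge\cdots\wedge f(x_n)$, and the compatibility conditions guarantee that this respects both products and brackets.

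For the adjunction $G\dashv F$ I would produce the natural bijection $\mathrm{Hom}_{GR}(G(L),\mathcal{A})\cong \mathrm{Hom}_{LR}(L,F(\mathcal{A}))$. Note first that $FG(L)$ is the Lie-Rinehart algebra $(\wedge^*_A L)_1=L$ over $(\wedge^*_A L)_0=A$, so the unit $\eta_L:L\rightarrow FG(L)$ is the identity. The bijection sends a Gerstenhaber homomorphism $\Phi:\wedge^*_A L\rightarrow \mathcal{A}$ to its restriction $(\Phi|_A,\Phi|_L)$, which lands in $F(\mathcal{A})$ and is Lie-Rinehart by the argument used for $F$. Conversely, a Lie-Rinehart morphism $(g,f):L\rightarrow F(\mathcal{A})$ extends, by the universal property of the exterior algebra, to the unique multiplicative map $\Phi(x_1\wedge\cdots\wedge x_n)=f(x_1)\wedge\cdots\wedge f(x_n)$ with $\Phi|_A=g$.

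The main obstacle, and the only step deserving real care, is verifying that this multiplicative extension $\Phi$ preserves the Schouten bracket on all of $\wedge^*_A L$, not merely in degrees zero and one where it is built to do so. I would argue by induction on total degree: since $[-,-]_G$ on the source and $[-,-]$ on the target both obey the Leibniz rule, the identity $\Phi[u,v]_G=[\Phi(u),\Phi(v)]$ propagates from generators (elements of $A$ and $L$, where it is precisely conditions (2) and (3) of a Lie-Rinehart homomorphism together with the $R$-linearity of $f$) to arbitrary wedge monomials. Once $\Phi$ is shown to be a Gerstenhaber homomorphism, uniqueness of the multiplicative extension gives bijectivity, and naturality in both $L$ and $\mathcal{A}$ follows by a routine diagram chase, establishing that $G$ is left adjoint to $F$.
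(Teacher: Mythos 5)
Your proposal is correct and follows the standard argument: restriction to degrees $0$ and $1$ for $F$, multiplicative extension to $\wedge^*_A L$ with the Schouten bracket for $G$, and the adjunction via the universal property of the exterior algebra, with the bracket-preservation of the extension handled by Leibniz-rule induction from generators. Note that the paper itself does not prove this statement (it is quoted from Theorem 5 of Gerstenhaber--Schack), but the proof it gives of the hom-analogue, Theorem \ref{EQ1}, uses exactly the same restriction/extension strategy you describe -- indeed your write-up supplies more detail (e.g.\ the verification that the extended map respects the bracket) than the paper's own argument there.
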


\begin{Def}\label{DerS}
(\cite{hom-Lie,HLIE01, hom-Lie1} ) Given an associative commutative algebra $A$, an $A$-module $M$ and an algebra endomorphism $\phi: A\rightarrow A$, we call an $R$-linear map $\delta: A\rightarrow M$ a $\phi$-derivation of $A$ into $M$ if it satisfies the required identity; 
$$\delta(a b)=\phi(a) \delta(b)+\phi(b) \delta(a)~~\mbox{for all}~~a,b \in A.$$
 Let us denote by $Der_{\phi}(A)$ the set of $\phi$-derivations.
\end{Def} 
\begin{Rem}
Suppose $M$ is a smooth manifold, $\psi: M\rightarrow M $ is a smooth map and induced map $\psi^*: C^{\infty}(M)\longrightarrow C^\infty(M)$ is defined by composition. Then  the space of $\psi^*$-derivations of the algebra of smooth functions $C^{\infty}(M)$ into itself can be identified with the space of sections of the pull- back bundle of the tangent bundle $TM$. This is usually denoted by $\Gamma(\psi^!TM).$ 
\end{Rem}
\begin{Def}\label{hom-Lie}
\cite{hom-Lie}
 A hom-Lie algebroid is a quintuple $(\xi,\phi,[-,-],\rho,\alpha)$, where $\xi: A \rightarrow M $ is a vector bundle over a smooth manifold $M$,
  $\phi: M \rightarrow M$ is a smooth map,  $[-,-]:\Gamma(A)\otimes \Gamma(A)\rightarrow \Gamma(A)$ is a bilinear map, the map $\rho: \phi^{!}A\rightarrow \phi^!TM$ is called the anchor and $\alpha: \Gamma (A)\rightarrow\Gamma(A)$ is a linear map such that following conditions are satisfied.
\begin{enumerate}
\item $\alpha(f X)=\phi^*(f) \alpha(X)$ for all $X\in \Gamma(A),~f \in C^{\infty}(M)$;
\item The triplet $(\Gamma(A),[-,-],\alpha)$ is a hom-Lie algebra;
\item The following hom-Leibniz identity holds:
$$[X,fY]=\phi^*(f)[X,Y]+\rho(X)[f]\alpha(Y);$$
for all $X,Y\in \Gamma(A),~f \in C^{\infty}(M)$
\item The pair $(\rho,\phi^*)$ is a representation of $(\Gamma(A),[-,-],\alpha)$ on $C^{\infty}(M).$
\end{enumerate}

A hom-Lie algebroid $(\xi,\phi,[-,-],\rho,\alpha)$ is called regular (or invertible) if the map $\alpha: \Gamma (A)\rightarrow\Gamma(A)$ is an invertible map and the smooth map $\phi: M \rightarrow M$ is a diffeomorphism.
\end{Def}
\begin{Rem}
 Note that $\rho(X)[f]$ denotes  a function on $M$ given by $$\rho(X)[f](m)=\big<d_{\phi(m)}f,\rho_m(X_{\phi(m)}) \big>$$for $m\in M$. Here the map $\rho_m:(\phi^!A)_m\cong A_{\phi(m)}\rightarrow (\phi^!TM)_m\cong T_{\phi(m)}M$ is the anchor map evaluated at $m\in M$ and $X_{\phi(m)}$ is  image of the section $X\in\Gamma (A)$ at $\phi(m)\in M$.
\end{Rem}
\begin{Def}
\cite{hom-Lie} 
A hom-Gerstenhaber algebra is a quadruple $(\mathcal{A}=\oplus_{i\in\mathcal{Z}_+}\mathcal{A}_i,\wedge,[-,-],\alpha)$ where $\mathcal{A}$ is a graded commutative associative $R$-algebra, $\alpha$ is an endomorphism of $(A,\wedge)$ of degree $0$ and $[-,-]:\mathcal{A}\otimes \mathcal{A}\rightarrow \mathcal{A}$ is a bilinear map of degree $-1$ such that:
\begin{enumerate}
\item $(\mathcal{A},[-,-],\alpha)$ is a graded hom-Lie algebra.
\item The hom-Leibniz rule holds:
      $$[X,Y\wedge Z]=[X,Y]\wedge \alpha(Z)+(-1)^{(i-1)j}\alpha(Y)\wedge [X,Z], $$
      for all $X\in \mathcal{A}_i,~Y\in \mathcal{A}_j,~Z\in \mathcal{A}_k$.
\end{enumerate}
\end{Def}
\begin{Exm}\label{Ger}
Given a Gerstenhaber algebra $(\mathcal{A},[-,-],\wedge)$ and an endomorphism $\alpha:(\mathcal{A},[-,-],\wedge)\rightarrow (\mathcal{A},[-,-],\wedge)$, the quadruple $(\mathcal{A},\wedge,\alpha\circ [-,-],\alpha)$ is a hom-Gerstenhaber algebra.  
\end{Exm}
\begin{Exm}\label{Hom-Ger2}
Suppose  $(\mathfrak{g},[-,-],\alpha)$ is a hom-Lie algebra. We can define a hom-Gerstenhaber algebra $(\mathfrak{G}=\wedge^*\mathfrak{g},\wedge,[-,-]_{\mathfrak{G}},\alpha_{\mathfrak{G}})$, where 
$$[x_1\wedge\cdots\wedge x_n,y_1\wedge \cdots\wedge y_m]_{\mathfrak{G}}=\sum_{i=1}^n\sum_{j=1}^m (-1)^{i+j}[x_i,y_j]\wedge \alpha_G(x_1\wedge\cdots \hat{x_i}\wedge\cdots\wedge x_n\wedge y_1\wedge \cdots \hat{y_j}\wedge\cdots\wedge y_m)$$
for all $x_1,\cdots,x_n,y_1,\cdots,y_m\in \mathfrak{g}$ and 
$$\alpha_{\mathfrak{G}}(x_1\wedge\cdots\wedge x_n)=\alpha(x_1)\wedge\cdots \wedge \alpha(x_n).$$
See \cite{hom-Lie} for further details.
\end{Exm}
If $(\mathcal{A}=\bigoplus_{i\in \mathbb{Z}_+}\mathcal{A}_i,\bigwedge_1,[-,-]_{\mathcal{A}},\alpha)$ and $(\mathcal{B}=\bigoplus_{i\in \mathbb{Z}_+}\mathcal{B}_i,\bigwedge_2,[-,-]_{\mathcal{B}},\beta)$ are hom-Gerstenhaber algebras, then a $0$-degree $R$-linear map $\Theta: \mathcal{A}\rightarrow \mathcal{B}$ is a homomorphism of hom-Gerstenhaber algebras if it satisfies following conditions.
\begin{enumerate}
\item $\Theta[X,Y]_{\mathcal{A}}=[\Theta(X),\Theta(Y)]_{\mathcal{B}} ~~\mbox{for all}~X\in \mathcal{A}_i, Y\in \mathcal{A}_j,$
\item $\Theta(X\wedge_1 Y)=\Theta(X)\wedge_2 \Theta(Y)~~\mbox{for all}~X\in \mathcal{A}_i, Y\in \mathcal{A}_j,$
\item For any $X\in \mathcal{A}_i,~~\Theta(\alpha(X))=\beta(\Theta(X)).$
\end{enumerate}
Denote the category of hom-Gerstenhaber algebras by $hGR$.

In Theorem 4.4, \cite{hom-Lie}, it is proved that there is a one-to-one correspondence between hom-Lie algebroid structures on a vector bundle $A$ over a smooth manifold  $M$ and hom-Gerstenhaber algebra structures on the graded vector space $\Gamma (\wedge^*A)$, where the graded associative commutative algebra structure on $\Gamma (\wedge^*A)$ is given by wedge product.

\section{Hom-Lie-Rinehart algebra}

In this section, we introduce the notion of hom-Lie-Rinehart algebras as an algebraic analogue of  hom-Lie algebroid defined. In a sequel, we define homomorphisms of these algebras to form a category, and we show that there are canonically defined adjoint functors between this category and  the category of hom-Gerstenhaber algebras.

Let $R$ be a commutative ring with unity, $A$  be an associative commutative $R$-algebra, and $\phi:A\rightarrow A$ be an algebra endomorphism. We will consider $A$-module action to be faithful. In the definition of a Lie-Rinehart algebra $L$ over $A$, we need to consider the Lie algebra $Der(A)$ of derivations on $A$. Here, we will consider the space of $\phi$-derivations (Definition \ref{DerS})
to define a hom-Lie-Rinehart algebra over the pair $(A, \phi)$.
\begin{Def}\label{hom-LR}
A hom-Lie Rinehart algebra over $(A,\phi)$  is a tuple $(A,L,[-,-],\phi,\alpha,\rho)$ ,
 where $A$ is an associative commutative algebra, $L$ is an $A$-module, $[-,-]:L\times L\rightarrow L$ is a skew symmetric bilinear map, the map $\phi:A\rightarrow A$ is an algebra homomorphism, $\alpha:L\rightarrow L $ is a linear map satisfying $\alpha([x,y])=[\alpha(x),\alpha(y)]$, and the $R$-linear map $\rho: L\rightarrow Der_{\phi}A$  are such that following conditions hold.
\begin{enumerate}
\item The triplet $(L,[-,-],\alpha)$ is a hom-Lie algebra.
\item $\alpha(a.x)=\phi(a).\alpha(x)$ for all $a\in A,~x\in L $.
\item $(\rho, \phi)$ is a representation of $(L,[~,~],\alpha)$ on $A$.
\item $\rho(a.x)=\phi(a).\rho(x)$ for all $a\in A,~x\in L $.
\item $[x,a.y]=\phi(a)[x,y]+ \rho(x)(a)\alpha(y)$ for all $a\in A,~x,y\in L $.
\end{enumerate}
\end{Def}
A \textbf{hom-Lie-Rinehart algebra} $(A,L,[-,-],\phi,\alpha,\rho)$ is said to be regular if the map $\phi:A\rightarrow A$ is an algebra automorphism and $\alpha:L\rightarrow L $ is a bijective map.
\begin{Exm}
A \textbf{Lie-Rinehart algebra} $L$  over $A$ with the Lie bracket $[-,-]:L\otimes L\rightarrow L$ and the anchor map $\rho:L\rightarrow Der(A)$ is a hom-Lie-Rinehart algebra $(A,L,[-,-],\phi,\alpha,\rho)$ where $\alpha=Id_L,~\phi=Id_A$ and $\rho:L\rightarrow Der_{\phi}A=Der(A)$.
\end{Exm}
If we consider the map $\alpha=Id_L$ in the above definition, then a hom-Lie-Rinehart algebra is a Lie-Rinehart algebra. In fact, $\alpha=Id_L$ forces $\phi=Id_A$, because of the identity: $\alpha(a.x)=\phi(a).(\alpha(x))$ for all $a\in A,~ x\in L$ and the fact that the action of $A$ on $L$ is faithful. 
\begin{Exm}\label{h-LA}
A \textbf{hom-Lie algebra} $(L,[-,-],\alpha)$ structure over an $R$-module $L$ gives the hom-Lie Rinehart algebra $(A,L,[-,-],\phi,\alpha,\rho)$ with $A=R$, the algebra morphism $\phi=Id_{\mathbb{K}}$ and the trivial action of $L$ on $R$. 
\end{Exm}
\begin{Exm}\label{h-Lie-algebroid}
Let us recall the definition of \textbf{hom-Lie algebroid $(A\rightarrow M,[-,-],\rho,\alpha,\phi)$} from Definition \ref{hom-Lie}. It provides a hom-Lie-Rinehart algebra $(C^{\infty}(M),\Gamma A,[-,-],\phi^*,\alpha,\rho)$ where $\Gamma A$ is the space a sections of the underline vector bundle $A\rightarrow M$ and $\phi^*:C^{\infty}(M)\rightarrow C^{\infty}(M)$ is canonically defined by the smooth map $\phi: M\rightarrow M$.
\end{Exm}
\begin{Exm}\label{pderhl}
Assume that $\phi: A\rightarrow A$ is an automorphism. Then $(Der_{\phi}A,[-,-]_{\phi},\alpha_{\phi})$ is a hom-Lie algebra. 
Here, the bilinear map
 $[-,-]_{\phi}:Der_{\phi}A\times Der_{\phi}A\rightarrow Der_{\phi}A$ is given by
\begin{equation}\label{rel1}
[D_1,D_2]_{\phi}=\phi\circ D_1\circ \phi^{-1}\circ D_2 \circ \phi^{-1}-\phi\circ D_2\circ \phi^{-1}\circ D_1 \circ \phi^{-1}.
\end{equation}
Also, the linear map $\alpha_{\phi}:Der_{\phi}A\rightarrow Der_{\phi}A$ is defined as
\begin{equation}\label{rel2}
\alpha_{\phi}(D)=\phi\circ D\circ \phi^{-1}.
\end{equation}

Furthermore, the space of $\phi$-derivations $Der_{\phi}A$ is an $A$-module (action defined using algebra multiplication in $A$) satisfying following identities:
\begin{enumerate}
\item $\alpha(a.D)=\phi(a).\alpha(D)$ for $a\in A,~\mbox{and} ~D\in Der_{\phi}A$;
\item $[D_1,a.D_2]_{\phi}=\phi(a).[D_1,D_2]_{\phi}+\alpha_{\phi}(D_1)(a).\alpha_{\phi}(D_2)$ for $a\in A,~\mbox{and} ~D_1,D_2\in Der_{\phi}A$.
\end{enumerate}

In turn it follows that the tuple $(A,Der_{\phi}A,[-,-]_{\phi},\alpha_{\phi},\alpha_{\phi})$ is a hom-Lie-Rinehart algebra over $(A,\phi)$ where the anchor map $\rho=\alpha_{\phi}$.

\end{Exm}
\begin{Exm}\label{hom-LR byc}
If we consider a Lie-Rinehart algebra $L$ over $A$ along with an endomorphism $$(\phi,\alpha):(A,L)\rightarrow (A,L)$$ in the category of Lie-Rinehart algebras then we get a hom-Lie-Rinehart algebra $(A,L,[-,-]_{\alpha}, \phi,\alpha,\rho_{\phi})$, called \textbf{``obtained by composition"}. Here
\begin{enumerate}
\item $[x,y]_{\alpha}=\alpha[x,y]$ for $x,y\in L$; 
\item $\rho_{\phi}(x)(a)=\phi(\rho(x)(a))$ for $x\in L, ~a\in A$.
\end{enumerate} 
Note that the following conditions are satisfied.
\begin{enumerate}
\item $(L,[-,-]_{\alpha},\alpha)$ is a hom-Lie algebra obtained by composition;
\item $\alpha(a.x)=\phi(a).\alpha(x)$;
\item $\rho_{\phi}(a.x)=\phi(a).\rho_{\phi}(x)$;
\item $[x,a.y]_{\alpha}=\phi(a).[x,y]_{\alpha}+\rho_{\phi}(x)(a).\alpha(y)$;
\item $(\rho_{\phi},\phi)$ is a representation of $(L,[~,~]_{\alpha},\alpha)$ on $A$;
\end{enumerate}
for all $x,y\in L,~a\in A$.
\end{Exm}
\begin{Exm}\label{T Hom-LR algebra}
Let $(L,[-,-],\alpha)$ be a hom-Lie algebra over $R$ and $A$ be an associative commutative $R$-algebra with a homomorphism $\phi: A\rightarrow A$, and $(\rho,\phi)$ be a representation of $(L,[-,-],\alpha)$ on $A$. Furthermore, the map $\rho$ to be a $R$-linear map from $L\rightarrow Der_{\phi}(A)$ gives the action of $L$ on $A$ via $\phi$-derivations. Then the \textbf{transformation hom-Lie-Rinehart algebra} structure on $\mathfrak{g}=A\otimes L$ is given by the tuple $(A,\mathfrak{g},[-,-],\phi,\tilde{\alpha},\tilde{\rho})$. More precisely we have the following:
\begin{enumerate}
\item The $R$-bilinear bracket $[-,-]$ on $\mathfrak{g}$ is given by
$$[a\otimes x, b\otimes y]_{\mathfrak{g}}= \phi(ab)\otimes [x,y]+ \phi(a)\rho(x)(b)\alpha(y)- \phi(b)\rho(y)(a)\alpha(x),$$
for all $x,y\in L$ and $a,b \in A$.
\item The $R$-linear map $\tilde{\alpha}:\mathfrak{g}\rightarrow \mathfrak{g}$ is given by $$\tilde{\alpha}(a\otimes x)= \phi(a)\otimes \alpha (x),$$
for all $a\in A,~x\in L$. Here, $(\mathfrak{g},[~,~]_{\mathfrak{g}},\tilde{\alpha})$ is a hom-Lie algebra and $\tilde{\alpha}[g,h]=[\tilde{\alpha}(g),\tilde{\alpha}(h)]$ for all $g,h\in \mathfrak{g}$.
\item The action of $\mathfrak{g}$ on $A$ via $\phi$-derivations is given by $$\tilde{\rho}(a\otimes x)(b)= \phi(a)\rho(x)(b)$$ for all $x\in L,~a,b\in A$.
\end{enumerate}
Note that in general, transformation hom-Lie-Rinehart algebras are not obtained by composition.
\end{Exm}
\begin{Exm}\label{Product}
Let $(A,L,[-,-]_L,\phi,\alpha_L,\rho_L)$ and $(A,M,[-,-]_M,\phi,\alpha_M,\rho_M)$ be hom-Lie-Rinehart algebras over $(A,\phi)$. We consider $$L\times_{Der_{\phi}A}M=\{(l,m)\in L\times M : \rho_l(l)=\rho_M(m)\},$$ where $L\times M$ denotes the Cartesian product. Then $(A,L\times_{Der_{\phi}A}M,[-,-],\phi,\alpha,\rho)$ is a hom-Lie-Rinehart algebra, where  
\begin{enumerate}
\item the bracket is given by 
$$[(l_1,m_1),(l_2,m_2)]=([l_1,l_2],[m_1,m_2]);$$
\item the endomorphism $\alpha:L\times_{Der_{\phi}A}M\rightarrow L\times_{Der_{\phi}A}M$ is given by
$$\alpha(l,m)=(\alpha_L(l),\alpha_M(m));$$
\item and the anchor map $\rho: L\times_{Der_{\phi}A}M\rightarrow Der_{\phi}A$ is given by
$$\rho(l,m)(a)=\rho_L(a)=\rho_M(a);$$
\end{enumerate}   
for all $l,l_1,l_2\in L$, $m,m_1,m_2\in M$, and $a\in A$. The above  structure gives the categorical product in the category $hLR_A^{\phi}$. Note that cartesian product is not the product in this category as expected from the case of Lie-Rinehart algebras \cite{CentExt}. 
\end{Exm}
\begin{Rem}
Suppose $\mathcal{A}=\bigoplus_{i\in\mathbb{Z}} \mathcal{A}_i$ is  a graded $R$-module. Let us recall from  Example \ref{Ger} that given a Gerstenhaber algebra structure $(\mathcal{A},\wedge,[-,-])$ and an endomorphism $\alpha:(\mathcal{A},\wedge,[-,-])\rightarrow (\mathcal{A},\wedge,[-,-])$,  we have a hom-Gerstenhaber algebra $(\mathcal{A},\wedge,[-,-]_{\alpha},\alpha)$. Later we will see that for a given hom-Gerstenhaber algebra $(\mathcal{A}=\bigoplus_{i\in\mathbb{Z}} \mathcal{A}_i,[-,-],\wedge,\alpha)$,  there is a  hom-Lie-Rinehart algebra structure  $(\mathcal{A}_0,\mathcal{A}_1,[-,-]_1,\phi,\alpha_{\mathcal{A}_1},\rho)$. In other words, given any Gerstenhaber algebra and an endomorphism of the Gerstenhaber algebra, we have a canonical hom-Lie-Rinehart algebra.
\end{Rem}
Next we define homomorphisms of hom-Lie-Rinehart algebras.
\begin{Def}
Let $(A,L,[-,-]_{L},\phi,\alpha_L,\rho_L)$ and $(B,L^{\prime},[-,-]_{L^{\prime}},\psi,\alpha_{L^{\prime}},\rho_{L^{\prime}})$ be hom-Lie-Rinehart algebras, then a hom-Lie-Rinehart algebra homomorphism is defined as a pair of maps $(g,f)$, where the map $g:A\rightarrow B$ is a $R$-algebra homomorphism and $f: L_1\rightarrow L_2$ is a $R$-linear map such that following identities hold:
\begin{enumerate}
\item $f(a.x)=g(a).f(x)~~\mbox{for all}~x\in L_1,~a\in A,$
\item $f[x,y]_L=[f(x),f(y)]_{L^{\prime}} ~~\mbox{for all}~x,y\in L,$
\item $f(\alpha_L(x))=\alpha_{L^{\prime}}(f(x)) ~~\mbox{for all}~x\in L,$
\item $g(\phi(a))=\psi(g(a))~~\mbox{for all}~a\in A$
\item $g(\rho_L(x)(a))=\rho_{L^{\prime}}(f(x))(g(a))~~\mbox{for all}~x\in L,~ a\in A.$
\end{enumerate}  
\end{Def} 
Hom-Lie-Rinehart algebras with homomorphisms form a category  of hom-Lie-Rinehart algebras, which we by $hLR$. If $\alpha_L=Id_L$ and $\alpha_{L^{\prime}}=Id_{L^{\prime}}$ (which in turn give $\phi=Id_A$ and $\psi=Id_B$.), then the above hom-Lie Rinehart algebras are the usual Lie-Rinehart algebras and the homomorphism of the hom-Lie Rinehart algebras is the usual homomorphism in category of Lie-Rinehart algebras. In fact, the category of Lie-Rinehart algebras is a full subcategory of the category of hom-Lie-Rinehart algebras. 
\begin{Rem}
If $A=B$ and $\phi=\psi$, then by taking $g=Id_A: A\rightarrow A$ we get homomorphism of hom-Lie- Rinehart algebras over $(A,\phi)$. Let us denote by $hLR^{\phi}_A$ the category of hom-Lie Rinehart algebras over $(A,\phi)$. To simplify the notations we denote a hom-Lie Rinehart algebra $(A,L,[-,-],\phi,\alpha,\rho)$ over $(A,\phi)$ simply by $(\mathcal{L},\alpha)$ and similarly for any other hom-Lie Rinehart algebra over $(A,\phi)$, say $(A,L^{\prime},[-,-]^{\prime},\phi,\alpha^{\prime},\rho^{\prime})$ simply by the notation $(\mathcal{L}^{\prime},\alpha^{\prime})$.
\end{Rem}
\subsection{Hom-Gerstenhaber algebras and hom-Lie -Rinehart algebras}
Given a hom-Gerstenhaber algebra $(\mathcal{A}=\bigoplus_{i\in\mathbb{Z}} \mathcal{A}_i,[-,-],\wedge,\alpha)$, we have the following identities for the pair $(\mathcal{A}_0,\mathcal{A}_1)$ of $R$-modules:
\begin{enumerate}
\item $\mathcal{A}_0$ is a commutative $R$-algebra and there is an $\mathcal{A}_0$-module action on $\mathcal{A}_1$, where commutative product is given by $\wedge:\mathcal{A}_0\otimes \mathcal{A}_0\rightarrow \mathcal{A}_0$ and action is given by $\wedge:\mathcal{A}_0\otimes \mathcal{A}_1\rightarrow \mathcal{A}_1.$(using the fact that $\wedge$ is a graded commutative product on $\mathcal{A}$.)

\item $(\mathcal{A}_1,[-,-]_1,\alpha_1)$ is a hom-Lie algebra, where $[-,-]_1:\mathcal{A}_1\otimes \mathcal{A}_1\rightarrow \mathcal{A}_1$ is the restriction of $[-,-]:\mathcal{A}\otimes \mathcal{A}\rightarrow \mathcal{A}$ on $\mathcal{A}_1\otimes \mathcal{A}_1$ and $\alpha_1:\mathcal{A}_1\rightarrow \mathcal{A}_1$ is restriction of $\alpha:\mathcal{A}\rightarrow\mathcal{A}$ on $\mathcal{A}_1$. Here, hom-Jacobi identity follows from graded hom-Jacobi identity for the graded bracket $[-,-].$

\item By graded hom-Jacobi identity, $[x,a\wedge b]=[x,a]\wedge \alpha_0(b)+ \alpha_0(a)\wedge [x,b]$ for all $x\in \mathcal{A}_1,~ a,b \in \mathcal{A}_0.$

\item $[a\wedge x, b]=\phi(a)\wedge [x,b]$ for all $x\in \mathcal{A}_1,~ a,b \in \mathcal{A}_0.$ Here we are using the graded hom-Jacobi identity and the fact that $[a,b]=0$.

\item $[x, a\wedge y]=[x,a]\wedge \alpha_1 (y)+ \alpha_0 (a) \wedge [x,y]$ for all $x,y\in \mathcal{A}_1,~ a \in \mathcal{A}_0.$
\end{enumerate}
Thus we have a hom-Lie-Rinehart algebra $(\mathcal{A}_0,\mathcal{A}_1,[-,-]_1,\phi,\alpha_{\mathcal{A}_1},\rho)$, where $\phi:=\alpha_0,~\alpha_{\mathcal{A}_1}:=\alpha_1,$ and $\rho:L\rightarrow Der_{\phi}(\mathcal{A}_0)$ is given by $\rho(x)(a)=[x,a]$ for all $x\in \mathcal{A}_1,~a\in \mathcal{A}_0$. (The definition of $\rho$ makes sense because of the identity $(3)$. Also by identity $(4)$, we have $\rho(ax)(b)=\phi(a)\rho(x)(b)$ for all $a,b\in \mathcal{A}_0,~ x\in \mathcal{A}_1$.) 

Furthermore, it follows
 that there is a functor
 $$\mathcal{F}:hGR\rightarrow hLR$$
 which assigns  the hom-Lie- Rinehart algebra $(\mathcal{A}_0,\mathcal{A}_1,[-,-]_1,\phi,\alpha_{\mathcal{A}_1},\rho)$ to a hom-Gerstenhaber algebra $(\mathcal{A}=\bigoplus_{i\in\mathbb{Z}} \mathcal{A}_i,[-,-],\wedge,\alpha)$.

The following theorem gives a one-to-one correspondence between  hom-Gerstenhaber algebra structures on $(\wedge_A^*L,[-,-]_G,\wedge,\alpha)$ and  hom-Lie Rinehart algebra structures on $(A,L,[-,-],\phi,\alpha,\rho)$. This result generalises the well known one-to-one correspondence between Lie-Rinehart algebras and Gerstenhaber algebras.
\begin{Thm}\label{EQ1}
Suppose, $hLR$ and $hGR$ denote the categories of hom-Lie-Rinehart algebras and hom-Gerstenhaber algebras, respectively. Let $\mathcal{G}: hLR\rightarrow hGR$ assigns $$(A,L,[-,-],\phi,\alpha,\rho) \mapsto (\wedge_A^* L,\wedge,[-,-]_G,\alpha_G),$$ where $\alpha_G: \wedge_A^*L\rightarrow \wedge_A^*L$ given by: 
\begin{equation}
\alpha_G(aX_1\wedge\cdots\wedge X_n)=\phi(a)\alpha(X_1)\wedge\cdots\wedge \alpha(X_n),
\end{equation}
and $[~,~]_G:\wedge_A^* L\otimes \wedge_A^* L\rightarrow \wedge_A^* L$ is given by:
$$[X_1\wedge\cdots\wedge X_n,a]_G=\sum_{i=1}^n \rho(X_i)(a)\wedge\alpha_G(X_1\wedge\cdots\wedge\hat{X_i}\wedge\cdots\wedge X_n)$$
and by 
$$[X_1\wedge\cdots\wedge X_n,Y_1\wedge\cdots\wedge Y_m]_G=\sum_{i=1}^n \sum_{j=1}^m[X_i,Y_j]\wedge\alpha_G(X_1\wedge\cdots\wedge\hat{X_i}\wedge\cdots\wedge X_n\wedge Y_1\wedge\cdots\wedge\hat{Y_j}\wedge\cdots\wedge Y_m) $$
where $X_i,Y_j\in L,~a\in A;~~1\leq i\leq n, ~1\leq j\leq m.$
Then $\mathcal{G}$ is a functor from the category $hLR$ to the category $hGR$. Moreover the functor $\mathcal{G}$ is left adjoint to the functor $\mathcal{F}: hGR\rightarrow hLR$. 
\end{Thm}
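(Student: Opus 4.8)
The plan is to verify in turn that $\mathcal{G}$ is well defined on objects, that it is functorial, and finally to exhibit the adjunction via a unit transformation together with a universal property. First I would check that $\mathcal{G}(A,L,[-,-],\phi,\alpha,\rho)$ really is a hom-Gerstenhaber algebra. The subtle point is the \emph{well-definedness} of $\alpha_G$ and $[-,-]_G$ on $\wedge_A^* L$: since both are prescribed only on decomposable elements, one must confirm they respect the $A$-multilinearity and skew-symmetry relations built into the exterior algebra over $A$. For $\alpha_G$ this is immediate from axiom (2), $\alpha(a.x)=\phi(a)\alpha(x)$, together with multiplicativity of $\phi$. For $[-,-]_G$ the crucial input is axiom (5), $[x,a.y]=\phi(a)[x,y]+\rho(x)(a)\alpha(y)$, which is exactly what is needed so that pulling a scalar $a\in A$ across the bracket produces the anchor term $\rho(x)(a)\alpha(y)$ demanded by the Schouten formula; combined with axiom (4), $\rho(a.x)=\phi(a)\rho(x)$, this forces the two defining formulas to descend to $\wedge_A^* L$. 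That $\alpha_G$ is a degree-$0$ endomorphism of $(\wedge_A^* L,\wedge)$ is then clear.

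Next I would establish the two hom-Gerstenhaber axioms for the pair $([-,-]_G,\alpha_G)$. The hom-Leibniz rule
$$[X,Y\wedge Z]_G=[X,Y]_G\wedge\alpha_G(Z)+(-1)^{(i-1)j}\alpha_G(Y)\wedge[X,Z]_G$$
is read off directly from the Schouten formula by separating the double sum over the factors of $Y$ and of $Z$, the only care being the sign and the placement of $\alpha_G$. For the graded hom-Jacobi identity I would argue by reduction of degree: using the hom-Leibniz rule it suffices to verify the identity on generators, i.e.\ on degree-$0$ elements $a\in A$ and degree-$1$ elements $x\in L$. On these the identity splits into the hom-Jacobi identity of the hom-Lie algebra $(L,[-,-],\alpha)$ (axiom (1)) and the representation identities for $(\rho,\phi)$ on $A$ (axiom (3)), the latter accounting for the anchor terms. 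This inductive reduction, with the bookkeeping of the twists $\alpha,\phi,\alpha_G$, is the main obstacle: in the hom-setting $[-,-]_G$ is no longer a graded derivation on the nose but only up to the twist $\alpha_G$, so each step of the induction must carry the correct powers of the structure maps, which obstructs the usual derivation argument for the classical Schouten bracket.

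With $\mathcal{G}$ defined on objects, functoriality is straightforward: a hom-Lie-Rinehart morphism $(g,f)$ extends to
$$\wedge^* f(a\,X_1\wedge\cdots\wedge X_n)=g(a)\,f(X_1)\wedge\cdots\wedge f(X_n),$$
and the morphism axioms (1)--(5) for $(g,f)$ yield the three hom-Gerstenhaber morphism conditions, while preservation of identities and composition is immediate. For the adjunction I would produce the unit $\eta:(\mathcal{L},\alpha)\to\mathcal{F}\mathcal{G}(\mathcal{L},\alpha)$, namely the identification of $(A,L)$ with the degree-$0$ and degree-$1$ parts of $\wedge_A^* L$; one checks from the construction of $\mathcal{F}$ that the induced anchor $\rho'(x)(a)=[x,a]_G=\rho(x)(a)$ and the induced bracket and twist agree with the original data, so $\eta$ is an isomorphism onto $\mathcal{F}\mathcal{G}(\mathcal{L})$.

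Finally I would verify the universal property: given a hom-Gerstenhaber algebra $(\mathcal{B},\beta)$ and a hom-Lie-Rinehart morphism $\psi=(g,f):(\mathcal{L},\alpha)\to\mathcal{F}(\mathcal{B},\beta)$, the requirement that an extension be an algebra homomorphism forces
$$\tilde\psi(a\,X_1\wedge\cdots\wedge X_n)=g(a)\,f(X_1)\wedge\cdots\wedge f(X_n),$$
which proves uniqueness. That this $\tilde\psi$ preserves $[-,-]_G$ and intertwines $\alpha_G$ with $\beta$ follows by expanding both sides through the Schouten formula and the hom-Leibniz rule valid in $\mathcal{B}$, using that $\psi$ already respects the bracket, the anchor, and the twists on the generators $A$ and $L$. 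Naturality of the resulting bijection $\mathrm{Hom}_{hGR}(\mathcal{G}(\mathcal{L}),\mathcal{B})\cong\mathrm{Hom}_{hLR}(\mathcal{L},\mathcal{F}(\mathcal{B}))$ in both variables is then routine, completing the proof that $\mathcal{G}$ is left adjoint to $\mathcal{F}$.
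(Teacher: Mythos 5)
Your proposal is correct and follows essentially the same route as the paper: both establish the natural bijection $\mathrm{Hom}_{hGR}(\mathcal{G}(\mathcal{L},\alpha),\mathcal{A})\cong \mathrm{Hom}_{hLR}((\mathcal{L},\alpha),\mathcal{F}(\mathcal{A}))$ by restricting hom-Gerstenhaber morphisms to the degree $0$ and $1$ components and extending hom-Lie-Rinehart morphisms multiplicatively to $\wedge_A^*L$, together with the naturality of this correspondence. You additionally spell out the object-level verification (well-definedness of $\alpha_G$ and $[-,-]_G$ over $A$ via axioms (2), (4), (5), and the twisted Leibniz and Jacobi identities), which the paper asserts without detailed proof; your unit-plus-universal-property packaging of the adjunction is just the standard equivalent form of the paper's hom-set bijection and commutative diagram.
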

\begin{proof}
We denote the Gerstenhaber algebra $(\mathcal{A},\wedge_{\mathcal{A}},[-,-]_{\mathcal{A}},\alpha_{\mathcal{A}})$ simply by $\mathcal{A}$.
Given a homomorphism $\Theta:\mathcal{A}\rightarrow\mathcal{B}$ in the category $hGR$, we have a homomorphism of hom-Lie Rinehart algebras $\mathcal{F}(\Theta):\mathcal{F}(\mathcal{A})\rightarrow \mathcal{F}(\mathcal{B})$, where $\mathcal{F}(\Theta)$ is obtained by restricting the map $\Theta $ on $(\mathcal{A}_0,\mathcal{A}_1)$. Similarly, given a homomorphism $\Phi: (\mathcal{L},\alpha)\rightarrow (\mathcal{M},\beta)$ in the category $hLR$, we get the morphism $\mathcal{G}(\Phi):\wedge_A^*L \rightarrow \wedge^*_A\mathcal{M}$ in the category $hGR$, which is defined by extending the map $\Phi$ from $(A,L)$ to $\wedge_A^*L$(extension of the map is similar to the extension of $\alpha_G$ from $\alpha$). Now, from the definition of homomorphisms in the categories $hGR$ and $hLR$, for every pair of $(\mathcal{L},\alpha)\in hLR$ and $\mathcal{A}\in hGR$ it follows that 
$$Hom_{hGR}(\mathcal{G}(\mathcal{L},\alpha), \mathcal{A})\cong Hom_{hLR}((\mathcal{L},\alpha), \mathcal{F}(\mathcal{A})).$$
Moreover the bijection is functorial in $(\mathcal{L},\alpha)\in hLR$ and $\mathcal{A}\in hGR$. In fact, for every morphism $f:(\mathcal{L},\alpha)\rightarrow(\mathcal{L}^{\prime},\alpha^{\prime})$ and $g:\mathcal{A}\rightarrow \mathcal{A}^{\prime}$, by considering $g_*$ and $f^*$ as the respective induced maps, we have the following commutative diagram:
$$\begin{CD}
Hom_{hGR}(\mathcal{G}(\mathcal{L}^{\prime},\alpha^{\prime}), \mathcal{A}) @> \mathcal{G}(f)^* >>
Hom_{hGR}(\mathcal{G}(\mathcal{L},\alpha), \mathcal{A}) @> g_{*} >> Hom_{hGR}(\mathcal{G}(\mathcal{L},\alpha), \mathcal{A}^{\prime})\\
@V\cong VV @V\cong VV @V\cong VV\\
Hom_{hLR}((\mathcal{L}^{\prime},\alpha^{\prime}),\mathcal{F} (\mathcal{A})) @> f^* >>
Hom_{hLR}((\mathcal{L},\alpha), \mathcal{F}(\mathcal{A})) @> \mathcal{F}(g)_{*} >> Hom_{hLR}((\mathcal{L},\alpha), \mathcal{F}(\mathcal{A}^{\prime}))
\end{CD}$$
 Consequently, $\mathcal{G}: hLR\rightarrow hGR$ is left adjoint to the functor $\mathcal{F}:hGR\rightarrow hLR$. 
\end{proof}
\begin{Rem}\label{CompS}
Here, we have considered the hom-Lie algebroids defined in \cite{hom-Lie} to define the algebraic counterpart as hom-Lie-Rinehart algebras. There is a modified definition of hom-Lie algebroids appeared in  \cite{hom-Lie1}. But, if we follow this modified definition then the one-to-one correspondence in  Theorem \ref{EQ1} will not be available any more. 
\end{Rem}

\section{Cohomology of  hom-Lie-Rinehart algebra}
We now define cohomology of a hom-Lie-Rinehart algebra. First we define the notion of (left-) module over a hom-Lie-Rinehart algebra.
\subsection{Modules over hom-Lie-Rinehart algebras}
Let $A$ be an associative and commutative $R$-algebra and $\phi$ be an algebra automorphism of $A$ and $(\mathcal{L},\alpha)$ be a hom-Lie-Rinehart algebra over $(A, \phi)$.
\begin{Def}\label{Mod}
Let $M$ be an $A$-module, and $\beta\in End_{R}(M)$. Then the pair $(M,\beta)$ is a left module over a hom-Lie Rinehart algebra $(\mathcal{L},\alpha)$ if the following holds.
\begin{enumerate}
\item There is a map $\theta:L\otimes M\rightarrow M$, such that the pair  $(\theta,\beta)$ is a representation of the hom-Lie algebra $(L,[-,-],\alpha)$ on $M$. Let us denote $\theta(x,m)$ by $\{x,m\}$ for $x\in L,~m\in M$.
\item $\beta(a.m)=\phi(a).\beta(m)$ for all $a\in A~\mbox{and}~m\in M$.
\item $\{a.X,m\}=\phi(a)\{X,m\}$ for all $a\in A,~X\in L,~m\in M$.
\item $\{X,a.m\}=\phi(a)\{X,m\}+\rho(X)(a).\beta(m)$ for all $X\in L,~a\in A,~m\in M$.
\end{enumerate} 
\end{Def}
If we consider $\alpha=Id_L$ and $\beta=Id_M$, then $(\mathcal{L},\alpha)$ is a Lie-Rinehart algebra and $M$ is a left Lie-Rinehart algebra module over the Lie-Rinehart algebra $L$.
\begin{Exm}
For $\alpha=Id_L$ any left Lie-Rinehart algebra module $M$ over $(A, L,[-,-], \rho)$ gives a left hom-Lie-Rinehart algebra module $(M,Id_M)$ over $(\mathcal{L},\alpha)$.  
\end{Exm}
\begin{Exm}
The pair $(A,\phi)$ is a left module over $(\mathcal{L},\alpha)$. As $(\rho,\phi)$ is a representation of $(L,[-,-],\alpha)$ over $A$. Further the conditions $(3)$ and $(4)$ are satisfied by definition of the map  $\rho$.
\end{Exm}

\subsection{Cochain complex of a Hom-Lie Rinehart algebra}
Let $(\mathcal{L},\alpha)$ be a hom-Lie Rinehart algebra over $(A,\phi)$ and $(M,\beta)$ be a module over $(\mathcal{L},\alpha)$. We consider the $\mathbb{Z}_+$-graded space of $R$-modules 
$$C^*(L;M):=\oplus_{n\geq 1}C^n(L;M)$$
 for hom-Lie-Rinehart algebra $(\mathcal{L},\alpha)$ with coefficients in $(M,\beta)$, where $C^n(L;M)\subseteq Hom_R(\wedge_R^n L,M)$ consisting of elements $f\in Hom_R(\wedge_R^n L,M)$ satisfying conditions below.
\begin{enumerate}
\item $f(\alpha(x_1),\cdots,\alpha(x_n))=\beta(f(x_1,x_2,\cdots,x_n))$ for all $x_i\in L,~1\leq i\leq n$
\item $f(x_1,\cdots,a.x_i,\cdots,x_n)=\phi^{n-1}(a)f(x_1,\cdots,x_i,\cdots,x_n)$ for all $x_i\in L,~1\leq i\leq n,~\mbox{and}~ a\in A$.
\end{enumerate}
Define the $R$-linear maps $\delta:C^n(L;M)\rightarrow C^{n+1}(L;M) $ given by 
\begin{equation}
\begin{split}
\delta f(x_1,\cdots,x_{n+1}):= &\sum_{i=1}^{n+1}(-1)^{i+1}\{\alpha^{n-1}(x_i),f(x_1,\cdots,\hat{x_i},\cdots,x_{n+1})\}\\&+\sum_{1\leq i<j\leq n+1}f([x_i,x_j],\alpha(x_1),\cdots,\hat{\alpha(x_i)},\cdots,\hat{\alpha(x_j)},\cdots,\alpha(x_{n+1}))
\end{split}
\end{equation}
for all $f\in C^n(L;M),~ x_i\in L$, where $1\leq i\leq n+1$. Here we follow these notations and deduce next that the map $\delta$  gives rise to a coboundary map.
\begin{Prop}
If $f\in C^n(L;M)$, then $\delta f \in C^{n+1}(L;M)$ and $\delta^2=0$. 
\end{Prop}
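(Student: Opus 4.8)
The plan is to verify the three assertions hidden in the statement: that $\delta f$ again satisfies the two defining properties of $C^{n+1}(L;M)$ (so that $\delta f\in C^{n+1}(L;M)$), and separately that $\delta\circ\delta=0$. Throughout I would use only the hom-Lie-Rinehart axioms of Definition \ref{hom-LR}, the module axioms of Definition \ref{Mod}, the representation identities of Definition \ref{Rep-hom-Lie}, and the two defining properties (1) and (2) of elements of $C^n(L;M)$. A convenient preliminary identity, obtained by iterating the intertwining relation $\rho(\alpha(x))\circ\phi=\phi\circ\rho(x)$ coming from the representation $(\rho,\phi)$, is
$$\rho(\alpha^{n-1}(x))(\phi^{n-1}(a))=\phi^{n-1}(\rho(x)(a)),$$
and likewise $\alpha^{n-1}(a.x)=\phi^{n-1}(a)\alpha^{n-1}(x)$ from axiom (2); these record exactly how the twists propagate through powers.

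First I would check the $\beta$-equivariance (property (1)) of $\delta f$. Substituting $\alpha(x_i)$ for each $x_i$ and using $\alpha^{n-1}(\alpha(x_i))=\alpha(\alpha^{n-1}(x_i))$, the compatibility $[\alpha(x_i),\alpha(x_j)]=\alpha([x_i,x_j])$, and property (1) for $f$, every argument inside each $f(\dots)$ becomes $\alpha$ applied to the original argument, so property (1) for $f$ turns $f(\dots)$ into $\beta(f(\dots))$. The action sum then collapses via $\{\alpha(x),\beta(m)\}=\beta(\{x,m\})$ and the bracket sum directly, giving $\delta f(\alpha(x_1),\dots,\alpha(x_{n+1}))=\beta(\delta f(x_1,\dots,x_{n+1}))$. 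This part is routine.

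Next, to prove property (2) for $\delta f$, namely $A$-linearity twisted by $\phi^{n}$ in each slot, I would fix a slot $k$, replace $x_k$ by $a.x_k$, and expand. Terms in which $a$ is not carried across a bracket or an action reproduce $\phi^n(a)$ times the corresponding term of $\delta f$, using $\{b.X,m\}=\phi(b)\{X,m\}$, property (2) for $f$, and the twists above. The leftover ``anchor corrections'' arise from the Leibniz rule $\{X,b.m\}=\phi(b)\{X,m\}+\rho(X)(b)\beta(m)$ in the action sum, producing terms $\rho(\alpha^{n-1}(x_i))(\phi^{n-1}(a))\beta(f(\dots))$, and from the hom-Leibniz rule $[x,a.y]=\phi(a)[x,y]+\rho(x)(a)\alpha(y)$ (via skew-symmetry when $a.x_k$ sits inside a bracket) in the bracket sum, producing $\phi^{n-1}(\rho(x_q)(a))\beta(f(\dots))$ after applying property (1) to the now all-$\alpha$ arguments. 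The preliminary identity rewrites the action-sum corrections into the same shape, and I would match them one-to-one against the bracket-sum corrections, checking that the signs from deletion and reordering cancel them in pairs. This sign-and-index bookkeeping is the main obstacle of the proposition: it is the exact twisted analogue of the classical Lie-Rinehart computation, and the powers of $\alpha$, $\phi$, $\beta$ in the definitions of $\delta$ and of $C^n(L;M)$ were arranged precisely so that the hom-axioms apply at each step.

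Finally, for $\delta^2=0$ I would expand $\delta(\delta f)(x_1,\dots,x_{n+2})$ and sort the terms into three families: double-action terms $\{\alpha^{n}(x_i),\{\alpha^{n-1}(x_j),f(\dots)\}\}$, mixed action-and-bracket terms, and two-bracket terms of the forms $f([x_i,x_j],\dots)$ and $f([\,[x_i,x_j],x_k],\dots)$. The double-action terms cancel using $\theta([x,y])\circ\beta=\theta(\alpha(x))\theta(y)-\theta(\alpha(y))\theta(x)$ together with $\theta(\alpha(x))\circ\beta=\beta\circ\theta(x)$; the two-bracket terms cancel by the hom-Jacobi identity $[\alpha(x),[y,z]]+[\alpha(y),[z,x]]+[\alpha(z),[x,y]]=0$ combined with $\alpha([x,y])=[\alpha(x),\alpha(y)]$; and the mixed terms cancel among themselves once property (1) of $f$ and the Leibniz rules realign the twists. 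As with property (2), the substance lies entirely in tracking signs and the placement of the powers of $\alpha$, $\phi$ and of $\beta$, so I regard that bookkeeping, rather than any single conceptual step, as the real work.
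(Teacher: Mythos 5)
Your proposal is correct and follows essentially the same route as the paper: a direct verification of the two defining conditions of $C^{n+1}(L;M)$ followed by the expansion for $\delta^2=0$, using exactly the identities you list (the iterated intertwining relation $\rho(\alpha^{n-1}(x))(\phi^{n-1}(a))=\phi^{n-1}(\rho(x)(a))$, the hom-Leibniz rules, the representation identities, and the hom-Jacobi identity). In fact the paper only writes out the $\beta$-equivariance computation explicitly and dismisses the other two checks as a ``simple calculation'' and a ``direct but long calculation,'' so your sketch of the cancellation mechanisms for the twisted $A$-linearity and for $\delta^2=0$ is, if anything, more detailed than the published argument.
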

\begin{proof}
First, we need to check that $\delta f(\alpha(x_1),\alpha(x_2)\cdots,\alpha(x_{n+1}))=\beta(\delta f(x_1,x_2,\cdots,x_{n+1}))$ for all $x_i\in L,~1\leq i\leq n+1$. We will use the fact that $f\circ \alpha= \beta \circ f$ and $\{\alpha(x),\beta(m)\}=\beta\{x,m\}$ (as $f\in C^n(L;M)$ and $(\theta,\beta)$ is a representation of $(L,[-,-],\alpha)$ on $M$). Now
\begin{align*}
\delta f(\alpha(x_1),\cdots,\alpha(x_{n+1}))&= \sum_{i=1}^{n+1}(-1)^{i+1}\{\alpha^{n}(x_i),f(\alpha(x_1),\cdots,\hat{\alpha(x_i)},\cdots,\alpha(x_{n+1}))\}\\
&+\sum_{1\leq i<j\leq n+1}f(\alpha([x_i,x_j]),\alpha^2(x_1),\cdots,\hat{\alpha^2(x_i)},\cdots,\hat{\alpha^2(x_j)},\cdots,\alpha^2(x_{n+1}))\\
&=\sum_{i=1}^{n+1}(-1)^{i+1}\{\alpha(\alpha^{n-1}(x_i)),\beta \big(f(x_1,\cdots,\hat{x_i},\cdots,x_{n+1})\big)\}\\
&+\sum_{1\leq i<j\leq n+1}\beta\big(f([x_i,x_j],\alpha(x_1),\cdots,\hat{\alpha(x_i)},\cdots,\hat{\alpha(x_j)},\cdots,\alpha(x_{n+1}))\big)\\
&=\sum_{i=1}^{n+1}(-1)^{i+1}\beta\Big(\{\alpha^{n-1}(x_i),f(x_1,\cdots,\hat{x_i},\cdots,x_{n+1})\}\Big)\\
&+\sum_{1\leq i<j\leq n+1}\beta\big(f([x_i,x_j],\alpha(x_1),\cdots,\hat{\alpha(x_i)},\cdots,\hat{\alpha(x_j)},\cdots,\alpha(x_{n+1}))\big)\\
&=\beta(\delta f(x_1,x_2,\cdots,x_{n+1})).\\
\end{align*}
Also, we need to check the expression
$\delta f(x_1,\cdots,a.x_i,\cdots,x_{n+1})=\phi^n(a)\delta f(x_1,\cdots,x_i,\cdots,x_{n+1})$ for all $x_i\in L,~1\leq i\leq n+1~\mbox{and}~a\in A$. But it follows from the simple calculation and using the fact that $(\rho,\phi)$ is a representation of $(L,[-,-],\alpha)$ on $A$. (i.e. $\rho(\alpha(x))(\phi(a))=\phi(\rho(x)(a))$ for all $x\in L, ~a\in A$.) 

Further, $\delta^2=0$ follows from the direct but a long calculation.
\end{proof}
By the above proposition, $(C^*(L,M),\delta)$ is a cochain complex. The resulting cohomology of the cochain complex we define to be the cohomology space of hom-Lie-Rinehart algebra $(\mathcal{L},\alpha)$ with coefficients in $(M,\beta)$, and we denote this cohomology as $H^*_{hLR}(L,M)$.

 We will use this cohomology in the next section when we consider extensions of a hom-Lie-Rinehart algebras. 
\begin{Rem}
Let $M$ be a smooth manifold. Let $A$ denote the space of smooth functions $C^{\infty}(M)$ and $L=\chi(M)$, the space of smooth vector fields on $M$, with $\alpha=Id_L$. Then the above cochain complex $(C^*(L,A),\delta)$ with coefficients in $(A,Id_A)$ is the de-Rham complex of $M$ (except $0$-cochains) and for $n\geq 2$, the $n^{th}$-cohomology group $H^n_{hLR}(L,A)$ is same as the $n^{th}$ de-Rham cohomology group of $M$. 
\end{Rem}

\section{Extensions of hom-Lie-Rinehart algebras}
 In this section, we introduce extensions of a hom-Lie-Rinehart algebra
  and we follow the same notations as in previous sections. First note that the category $hLR_A^{\phi}$ does not have zero object. Thus, by a short exact sequence written as
$$\begin{CD}
(\mathcal{L}^{\prime\prime},\alpha^{\prime\prime}) @>i>> (\mathcal{L}^{\prime},\alpha^{\prime}) @>\sigma>> (\mathcal{L},\alpha)@.
\end{CD}$$ in the category $hLR_A^{\phi}$ what we mean is that the homomorphism $i:(\mathcal{L}^{\prime\prime},\alpha^{\prime\prime})\rightarrow(\mathcal{L}^{\prime},\alpha^{\prime})$ is injective, the homomorphism $\sigma:(\mathcal{L}^{\prime},\alpha^{\prime})\rightarrow (\mathcal{L},\alpha)$ is surjective and $\sigma\circ i=0$. 

\begin{Def} 
A short exact sequence in the category $hLR^{\phi}_A$
$$\begin{CD}
(\mathcal{L}^{\prime\prime},\alpha^{\prime\prime}) @>i>> (\mathcal{L}^{\prime},\alpha^{\prime}) @>\sigma>> (\mathcal{L},\alpha)@.
\end{CD}$$
is called an extension of the hom-Lie-Rinehart algebra $(\mathcal{L},\alpha)$ by the hom-Lie-Rinehart algebra $(\mathcal{L}^{\prime\prime},\alpha^{\prime\prime})$. Here, anchor map of the hom-Lie-Rinehart algebra $(\mathcal{L}^{\prime\prime},\alpha^{\prime\prime})$ is trivial, i.e. $\rho^{\prime\prime}=0$, since $\sigma\circ i=0$.
\end{Def}

An extension of hom-Lie-Rinehart algebra $(\mathcal{L},\alpha)$ is said to be $A$-split if we have an $A$-module map $\tau: (\mathcal{L},\alpha)\rightarrow (\mathcal{L}^{\prime},\alpha^{\prime})$ such that
\begin{enumerate}
\item $\sigma \circ \tau=Id_{(\mathcal{L},\alpha)}$;
\item $\tau(a.x)=a.\tau(x)$ and
\item $\tau\circ\alpha = \alpha^{\prime}\circ \tau$
\end{enumerate}
for each $a\in A,~x\in L$. Furthermore, if the section $\tau$ for the map $\sigma$ is a homomorphism of hom-Lie-Rinehart algebras, then this extension is said to be split in the category of hom-Lie-Rinehart algebras.
\begin{Def}\label{Remark}
Let $(\mathcal{L},\alpha)=(A,L,[-,-]_1,\phi,\alpha,\rho_1)$ be a hom-Lie-Rinehart algebra over $(A,\phi)$ and 
$(\mathcal{M},\beta)=(A,M,[-,-]_2,\phi,\beta,\rho_2)$ be another hom-Lie-Rinehart algebra  over $(A,\phi)$ with anchor map $\rho_2=0$, then we say that $(\mathcal{L},\alpha)$ acts on $(\mathcal{M},\beta)$ if the following conditions hold.
\begin{enumerate}
\item There is a map $\theta:L\otimes M\rightarrow M$, such that the pair  $(\theta,\beta)$ is a representation of the hom-Lie algebra $(L,[-,-]_1,\alpha)$ on $M$. Let us denote $\theta(x,m)$ by $\{x,m\}$ for $x\in L,~m\in M$.
\item For $x\in L,~m,n\in M$, we have $\{\alpha(x),[m,n]_2\}=[\{x,m\},\beta(n)]_2+[\beta(m),\{x,n\}]_2$, where $\{-,-\}:L\otimes M\rightarrow M$ is the action of $L$ on $M$.
\item $\{a.X,m\}=\phi(a)\{X,m\}$ for all $a\in A,~X\in L,~m\in M$.
\item $\{X,a.m\}=\phi(a)\{X,m\}+\rho(X)(a).\beta(m)$ for all $X\in L,~a\in A,~m\in M$.
\end{enumerate}
\end{Def}
In the above definition, condition $(1)$ and $(2)$ imply that $(L,[-,-]_1,\alpha)$ acts on $(M,[-,-]_2,\beta)$ in the category of hom-Lie $R$-algebras.

Now, let us consider $(\mathcal{L}^{\prime},\alpha^{\prime}):=(A,L^{\prime},[-,-]^{\prime},\phi,\alpha^{\prime},\rho^{\prime})$, where 
\begin{itemize}
\item $L^{\prime}=L\oplus M$, direct sum of $A$-modules;
\item $[(x,m),(y,n)]^{\prime}=([x,y]_1,[m,n]_2+\{x,n\}-\{y,m\}); $ 
\item $\alpha^{\prime}((x,m))=(\alpha(x),\beta(m));$
\item $\rho^{\prime}(x,m)=\rho(x),$
\end{itemize}
for all $x,y\in L,~m,n\in M $. Then $(L^{\prime},[-,-]^{\prime}, \alpha^{\prime})$ is a hom-Lie algebra and it is the semi-direct product of $(L,[-,-]_1,\alpha)$ and $(M,[-,-]_2,\beta)$ in the category of hom-Lie $R$-algebras. Also there is an $A$-module structure on $L^{\prime}$. This yields that $(\mathcal{L}^{\prime},\alpha^{\prime})=(A,L^{\prime},[-,-]^{\prime},\phi,\alpha^{\prime},\rho^{\prime})$ is a hom-Lie-Rinehart algebra, which we denote by $(\mathcal{L},\alpha)\rtimes (\mathcal{M},\beta)$. 

In particular, if $\phi=id_A,~\beta=id_M$, and $\alpha=id_L$, then we have
\begin{enumerate}
\item[(i)]
the action defined above is an action of a Lie-Rinehart algebra $L$ (over $A$) on a Lie $A$-algebra $M$ and
\item[(ii)]the semi direct product of $L$ and $M$ in category of Lie $R$-algebras gives a Lie-Rinehart algebra over $A$ ( for details see Section 2, \cite{CentExt}).
\end{enumerate}
Note that if $[-,-]_M=0$, then Definition \ref{Mod} becomes a particular case of the above definition since any hom-Lie-Rinehart algebra module $(M,\beta)$ is a hom-Lie-Rinehart algebra over $(A,\phi)$ (with a trivial bracket and a trivial action on $A$).

\begin{Rem}
Not every extension of a hom-Lie-Rinehart algebra is $A$-split. Assume that $L$ is a projective $A$-module, then we have an $A$-linear map $\tau:L\rightarrow L^{\prime}$ such that $\sigma\circ\tau=Id_L$, but it may not satisfy the identity: $\tau\circ\alpha = \alpha^{\prime}\circ \tau$. However, if we take a hom-Lie-Rinehart algebra $(\mathcal{L},\alpha)$ over $(A,\phi)$ acting on a hom-Lie-Rinehart algebra  $(\mathcal{M},\beta)$ over $(A,\phi)$ with trivial anchor map, then for $(\mathcal{L}^{\prime},\alpha^{\prime})=(\mathcal{L},\alpha)\rtimes (\mathcal{M},\beta)$ the short exact sequence
$$\begin{CD}
(\mathcal{M},\beta)@>i>> (\mathcal{L}^{\prime},\alpha^{\prime}) @> \sigma>>(\mathcal{L},\alpha)
\end{CD}$$
is split in the category $hLR^{\phi}_A$. Thus, it is also an $A$-split extension of $(\mathcal{L},\alpha)$. 
\end{Rem}

\subsection{Abelian Extensions:}
In this subsection, we will define an abelian extension of a hom-Lie-Rinehart algebra by a module in the category $hLR^{\phi}_A$. Note that any hom-Lie-Rinehart algebra module $(M,\beta)$ gives a hom-Lie-Rinehart algebra $(A,M,[-,-]_M,\phi,\beta,\rho_M) \in hLR^{\phi}_A$, with a trivial bracket and a trivial anchor map. Let us denote this object in $hLR^{\phi}_A$ by $(\mathcal{M},\beta)$.
\begin{Def}
Let $(\mathcal{L},\alpha)$ be a hom-Lie-Rinehart algebra over $(A,\phi)$ and $(M,\beta)$ be a module over $(\mathcal{L},\alpha)$. A short exact sequence 
$$\begin{CD}
(\mathcal{M},\beta)@>i>> (\mathcal{L}^{\prime},\alpha^{\prime}) @> \epsilon>>(\mathcal{L},\alpha)@. 
\end{CD}$$
in the category $hLR^{\phi}_A$, is called an abelian extension of $(\mathcal{L},\alpha)$ by $(M,\beta)$ if 
$$[i(m),x]=i((\epsilon(x)).m)~~\mbox{for all }m\in M, ~x\in L^{\prime}.$$
\end{Def}
Next, we will show that the second cohomology space $H^2_{hLR}(L,M)$ of a hom-Lie-Rinehart algebra $(\mathcal{L},\alpha)$ with coefficients in $(M,\beta)$ classifies $A$-split abelian extensions of $(\mathcal{L},\alpha)$ by $(M,\beta)$.

This result generalises the well-known classification theorems for the classical cases of a Lie algebras \cite{HomAlg} and  Lie-Rinehart algebras \cite{Hueb1}.

\begin{Thm}\label{Char1}
There is a one-to-one correspondence between the equivalence classes of $A$-split abelian extensions of a hom-Lie-Rinehart algebra $(\mathcal{L},\alpha)$ by $(M,\beta)$ and the cohomology classes in $H^2_{hLR}(L,M)$.
\end{Thm}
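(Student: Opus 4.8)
The plan is to establish the bijection in both directions, following the standard pattern for classifying abelian extensions by second cohomology, but carefully tracking the twisting maps $\alpha,\beta,\phi$ throughout. First I would construct a cocycle from an $A$-split abelian extension. Given an $A$-split abelian extension $(\mathcal{M},\beta)\xrightarrow{i}(\mathcal{L}',\alpha')\xrightarrow{\epsilon}(\mathcal{L},\alpha)$ with an $A$-linear section $\tau:L\to L'$ satisfying $\sigma\circ\tau=\mathrm{Id}_L$, $\tau(a.x)=a.\tau(x)$, and $\tau\circ\alpha=\alpha'\circ\tau$, I would define $f:\wedge_R^2 L\to M$ by the familiar curvature formula $i(f(x,y))=[\tau(x),\tau(y)]'-\tau([x,y])$. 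The abelian extension condition $[i(m),x]=i((\epsilon(x)).m)$ guarantees that $f$ lands in $M$ and that the induced action of $L$ on $M$ through $\tau$ agrees with the given module structure $\theta$ from Definition \ref{Mod}. I would then verify that $f\in C^2(L;M)$, meaning $f(\alpha(x),\alpha(y))=\beta(f(x,y))$ (which follows from $\tau\circ\alpha=\alpha'\circ\tau$ and $\alpha'$ being an algebra homomorphism intertwining the brackets) and $f(a.x,y)=\phi(a)f(x,y)$ (which follows from $A$-linearity of $\tau$ together with the hom-Leibniz identity, condition (5) of Definition \ref{hom-LR}, whose $\rho$-correction term is absorbed by the module action). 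The hom-Jacobi identity for $(L',[-,-]',\alpha')$ then forces $\delta f=0$, so $f$ is a $2$-cocycle.

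Next I would show this assignment descends to cohomology classes and is well-defined on equivalence classes of extensions. Changing the section $\tau$ to another $A$-linear $\alpha$-equivariant section $\tau'$ produces $g:L\to M$ with $i(g(x))=\tau'(x)-\tau(x)$, and one checks $g\in C^1(L;M)$ and that the two cocycles differ by $\delta g$; an equivalence of extensions (a hom-Lie-Rinehart isomorphism of the middle terms commuting with $i$ and $\epsilon$) similarly changes the section by a coboundary. Conversely, given a cocycle $f\in C^2(L;M)$, I would build the extension on the $A$-module $L'=L\oplus M$ with the twisted bracket
\begin{equation*}
[(x,m),(y,n)]'=\bigl([x,y],\{x,n\}-\{y,m\}+f(x,y)\bigr),
\end{equation*}
the map $\alpha'(x,m)=(\alpha(x),\beta(m))$, and anchor $\rho'(x,m)=\rho(x)$. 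Here the cocycle conditions $f(\alpha x,\alpha y)=\beta f(x,y)$ and $f(a.x,y)=\phi(a)f(x,y)$ are exactly what is needed for $\alpha'([g,h]')=[\alpha'(g),\alpha'(h)]'$ and for the hom-Leibniz identity (condition (5)) to hold, while $\delta f=0$ is precisely the hom-Jacobi identity for the new bracket. This is essentially the semidirect product construction $(\mathcal{L},\alpha)\rtimes(\mathcal{M},\beta)$ from Definition \ref{Remark} perturbed by $f$, so most structural verifications reduce to computations already organized there.

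Finally I would confirm the two constructions are mutually inverse up to equivalence: the extension built from $f$ carries the obvious $A$-linear $\alpha$-equivariant section $x\mapsto(x,0)$ whose associated cocycle is $f$ itself, and starting from an extension, choosing a section, forming $f$, and rebuilding gives an extension isomorphic to the original via $(x,m)\mapsto\tau(x)+i(m)$. The main obstacle, and where I would spend the most care, is the bookkeeping in verifying that the coboundary operator $\delta$ of Section 4 matches the hom-Jacobi identity for the perturbed bracket under the twisting maps; the powers $\alpha^{n-1}$ and $\phi^{n-1}$ appearing in the differential and in the definition of $C^n(L;M)$ must line up exactly with the twisted terms $\{\alpha^{n-1}(x_i),-\}$ and the hom-Leibniz corrections, and it is easy to misplace an $\alpha$ or $\phi$. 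Checking that the cocycle identity in degree $2$ is literally the associator/hom-Jacobi obstruction for $[-,-]'$ is the crux; once that correspondence is pinned down, the equivalence-class argument and the inverse-construction bookkeeping are routine.
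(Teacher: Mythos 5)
Your proposal is correct and follows essentially the same route as the paper: the paper likewise builds the extension on $L\oplus M$ with bracket $[(x,m),(y,n)]'=([x,y],[x,n]-[y,m]+f(x,y))$ and $\alpha'(x,m)=(\alpha(x),\beta(m))$, extracts the cocycle $\Omega_\tau(x,y)=i^{-1}([\tau(x),\tau(y)]-\tau([x,y]))$ from a chosen $A$-linear $\alpha$-equivariant section, and checks independence of the section and of the representative of the equivalence class exactly as you outline. The bookkeeping points you flag (membership of $\Omega_\tau$ in $C^2(L;M)$ via $\tau\circ\alpha=\alpha'\circ\tau$, and $\delta\Omega_\tau=0$ via the hom-Jacobi identity for $[-,-]'$) are precisely the verifications the paper records.
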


\begin{proof}
Let $f$ be a representative of the cohomology class $[f]\in H^2_{hLR}(L,M)$. Consider a hom-Lie-Rinehart algebra $(\mathcal{L}^{\prime},\alpha^{\prime}):=(A,L^{\prime},[-,-]^{\prime},\phi,\alpha^{\prime},\rho^{\prime})$, where the structure constraints are given as follows: 
\begin{enumerate}
\item $L^{\prime}=L\oplus M$, a direct sum of $A$-modules;
\item $[(x,m),(y,n)]^{\prime}=([x,y],[x,n]-[y,m]+f(x,y)); $ 
\item $\alpha^{\prime}((x,m))=(\alpha(x),\beta(m));$
\item $\rho^{\prime}(x,m)=\rho(x)=\rho(\pi(x,m)),$
\end{enumerate}
for all $x,y\in L,~m,n\in M $ and $\pi:L^{\prime}\rightarrow L$ defined as $\pi(x,m)=x$. Then 
$$\begin{CD}
(\mathcal{M},\beta)@>i>> (\mathcal{L}^{\prime},\alpha^{\prime}) @> \pi>>(\mathcal{L},\alpha),
\end{CD}$$
is an A-split abelian extension of $(\mathcal{L},\alpha)$ by $(M,\beta)$, where $i:M\rightarrow L^{\prime}$ is defined by $i(m)=(0,m)$. 

Suppose we take an another representative $f^{\prime}$ of the cohomology class $[f]\in H^2_{hLR}(L,M)$ and get an extension $(\mathcal{L}^{\prime\prime},\alpha^{\prime\prime})$ as above. Since $f$ and $f^{\prime}$ represent the same cohomology class $[f]$, we have $f-f^{\prime}=\delta g $ for some $g\in C^1(L,M)$. Then the map $F: (\mathcal{L}^{\prime},\alpha^{\prime})\rightarrow (\mathcal{L}^{\prime\prime},\alpha^{\prime\prime}) $ defined by $F(x,m)=(x,m+g(x))$ gives an isomorphism of the above extensions obtained by using $f$ and $f^{\prime}$ respectively. Thus for a cohomology class in $H^2_{hLR}(L,M)$ there is a unique equivalence class of $A$-split abelian extensions of $(\mathcal{L},\alpha)$ by $(M,\beta)$.

Conversely, let  
$$\begin{CD}
(\mathcal{M},\beta)@>i>> (\mathcal{L}^{\prime},\alpha^{\prime}) @> \sigma>>(\mathcal{L},\alpha)
\end{CD}$$
be an A-split abelian extension of the hom-Lie- Rinehart algebra $(\mathcal{L},\alpha)$ by $(M,\beta)$. We will first show that we can define a $2$-cocycle in $C^2(L,M)$ which is independent of a section for the map $\sigma$.

First, we fix a section $\tau: L\rightarrow L^{\prime}$ for the map $\sigma$. Now consider the map $G:L\oplus M\rightarrow L^{\prime}$ given by 
$$G(x,m)=\tau(x)+i(m).$$  
Then it follows that $G$ is an injective $A$-module homomorphism. In fact, $G$ is an isomorphism of $A$-modules.

Define a $2$-cochain $ \Omega_{\tau}\in C^2(L,M)$ by the following expression;
\begin{equation*}
\Omega_{\tau}(x,y)=i^{-1}\big([\tau(x),\tau(y)]-\tau([x,y])\big),
\end{equation*}
for all $x,y\in L$. Here we have
\begin{enumerate}
\item $\Omega_{\tau}$ is a skew-symmetric $R$-bilinear map and it satisfies $\Omega_{\tau}(a.x,y)=\phi(a)\Omega_{\tau}(x,y)$  for all $x,y\in L,~a\in A$;
\item $\delta(\Omega_{\tau})=0$, which follows using hom-Jacobi identity for $(L^{\prime},[-,-]^{\prime},\alpha^{\prime})$;
\item $\Omega_{\tau}\circ\alpha=\beta\circ\Omega_{\tau}$, which follows by the relations $\tau\circ\alpha=\alpha^{\prime}\circ\tau,~$ and $\alpha^{\prime}\circ i=i\circ \beta$.
\end{enumerate}
Consequently,  we get that $\Omega_{\tau}$ is a $2$-cocycle in $C^2(L,M)$.  

Note that if we take another section $\tau^{\prime}:L\rightarrow L^{\prime}$ of $\sigma$. Then the resulting 2-cocycle $\Omega_{\tau^{\prime}}$ is cohomologous to $\Omega_{\tau}$. This follows from the fact that $\Omega_{\tau^{\prime}}-\Omega_{\tau}=\delta(i^{-1}\circ(\tau^{\prime}-\tau))$ for $(i^{-1}\circ(\tau^{\prime}-\tau))\in C^1(L,M)$. Thus for a given $A$-split abelian extension of $(\mathcal{L},\alpha)$ by $(M,\beta)$, there exists a unique cohomology class $[\Omega_{\tau}]\in H^2(L,M).$

In order to complete the proof, we need to show that for two equivalent $A$-split abelian extensions, the associated $2$-cocycles are cohomologous.

Let 
$$\begin{CD}
(\mathcal{M},\beta)@>i^{\prime}>> (\mathcal{L}^{\prime\prime},\alpha^{\prime\prime}) @> \sigma^{\prime}>>(\mathcal{L},\alpha)
\end{CD}$$
be another $A$-split abelian extension of $(\mathcal{L},\alpha)$ by $(M,\beta)$ , and it is isomorphic to the extension:
$$\begin{CD}
(\mathcal{M},\beta)@>i>> (\mathcal{L}^{\prime},\alpha^{\prime}) @> \sigma>>(\mathcal{L},\alpha). 
\end{CD}$$
Suppose the map $\Phi:(\mathcal{L}^{\prime},\alpha^{\prime})\rightarrow(\mathcal{L}^{\prime\prime},\alpha^{\prime\prime}) $ is an isomorphism of these extensions, that is the following diagram commutes:
$$\begin{CD}
(\mathcal{M},\beta)@>i>> (\mathcal{L}^{\prime},\alpha^{\prime}) @> \sigma>>(\mathcal{L},\alpha)\\
  @|  @V \Phi VV @|\\
(\mathcal{M},\beta)@>i^{\prime}>> (\mathcal{L}^{\prime\prime},\alpha^{\prime\prime}) @> \sigma^{\prime}>>(\mathcal{L},\alpha)
\end{CD}$$ 
 Now we will show that for a section $\tau:(\mathcal{L},\alpha)\rightarrow (\mathcal{L}^{\prime},\alpha^{\prime})$ of $\sigma$ and $\tau^{\prime}:(\mathcal{L},\alpha)\rightarrow (\mathcal{L}^{\prime\prime},\alpha^{\prime\prime})$ of $\sigma^{\prime}$, the respective associated cocycles $\Omega_{\tau}$ and $\Omega_{\tau^{\prime}}$ are cohomologous. Consider, $\tau^{\prime\prime}=\Phi\circ \tau :(\mathcal{L},\alpha)\rightarrow (\mathcal{L}^{\prime\prime},\alpha^{\prime\prime})$ a section for $\sigma^{\prime}.$ Then we have $\Omega_{\tau^{\prime\prime}}=\Omega_{\tau}.$  Here, $\Omega_{\tau^{\prime\prime}}$ and $\Omega_{\tau^{\prime}}$ are cohomologous in $H^2_{hLR}(L,M)$ because of the fact that $\tau^{\prime}$ and $\tau^{\prime\prime}$, both are sections of $\sigma^{\prime}$. Therefore, $\Omega_{\tau}$ and $\Omega_{\tau^{\prime}}$ are cohomologous in $H^2_{hLR}(L,M)$.
\end{proof}
\begin{Rem}\label{Altbr}
Consider an $A$-split extension of a hom-Lie-Rinehart algebra $(\mathcal{L},\alpha)$ by a module $(M,\beta)$:
$$\begin{CD}
(\mathcal{M},\beta)@>i>> (\mathcal{L}^{\prime},\alpha^{\prime}) @> \sigma>>(\mathcal{L},\alpha).
\end{CD}$$ 
If we fix a section $\tau$ for the map $\sigma$ then we have an isomorphism of the underlying $A$-modules given by  $G: L^{\prime}\rightarrow L\oplus M$ where $G(X)=(\sigma(X),i^{-1}(X-\tau\circ\sigma(X))).$ Let us denote by notation $(x,m)_{\tau}$
the inverse image of $(x,m)$ under the isomorphism $G$, which is $\tau(x)+i(m)$. Then any $X,Y\in L^{\prime}$ can be written as $X=(x,m)_{\tau}$ and $Y=(y,n)_{\tau}$ for some $x,y\in L$ and $m,n\in M$. Moreover, the Lie bracket on $L^{\prime}$ can  be expressed as 
$$[X,Y]^{\prime}=([x,y],[x,n]-[y,m]+\Omega_{\tau}(x,y))_{\tau}.$$
\end{Rem}
In the next result we will present a characterisation of the first cohomology space $H^1_{hLR}(L,M)$ in terms of group of automorphisms of an $A$-split abelian extension.

\begin{Thm}\label{Cor2}
There is a one-to-one correspondence between the group of automorphisms of a given $A$-split abelian extension,
$$\begin{CD}
(\mathcal{M},\beta)@>i>> (\mathcal{L}^{\prime},\alpha^{\prime}) @> \sigma>>(\mathcal{L},\alpha)
\end{CD}$$
of a hom-Lie-Rinehart algebra $(\mathcal{L},\alpha)$ by $(M,\beta)$ ($(\mathcal{M},\beta)$ is corresponding object in $hLR_A^{\phi}$) and cohomology space $H^1_{hLR}(L,M)$.
\end{Thm}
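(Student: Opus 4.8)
The plan is to fix a section $\tau:L\to L'$ (available because the extension is $A$-split) and use it to identify each automorphism of the extension with a $1$-cocycle. By an automorphism of the given $A$-split abelian extension I mean an isomorphism $\Psi:(\mathcal{L}',\alpha')\to(\mathcal{L}',\alpha')$ in $hLR_A^{\phi}$ restricting to the identity on $(\mathcal{M},\beta)$ and inducing the identity on $(\mathcal{L},\alpha)$; equivalently, $\Psi\circ i=i$ and $\sigma\circ\Psi=\sigma$. These form a group under composition, and the goal is to produce a group isomorphism from this group onto $H^1_{hLR}(L,M)$.

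First I would pin down the shape of such a $\Psi$. Using the $A$-module isomorphism $G:L'\to L\oplus M$ of Remark \ref{Altbr} and writing $X=(x,m)_{\tau}=\tau(x)+i(m)$, the two conditions $\sigma\circ\Psi=\sigma$ and $\Psi\circ i=i$ force $\Psi(X)=(x,\,m+D(x))_{\tau}$ for a unique map $D:L\to M$, namely $D(x)=i^{-1}\big(\Psi(\tau(x))-\tau(x)\big)$. Next I would read off the remaining structural constraints on $D$: the $A$-linearity of $\Psi$ yields $A$-linearity of $D$; the relation $\Psi\circ\alpha'=\alpha'\circ\Psi$, together with $\tau\circ\alpha=\alpha'\circ\tau$ and $\alpha'\circ i=i\circ\beta$, yields $D\circ\alpha=\beta\circ D$; and compatibility with the anchor is automatic, since $\rho'(x,m)=\rho(x)$ depends only on $\sigma(X)$. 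These two properties say precisely that $D\in C^1(L,M)$.

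The decisive step is to impose bracket-preservation. Expanding $\Psi([X,Y]')$ and $[\Psi(X),\Psi(Y)]'$ by means of the bracket
\[
[X,Y]'=\big([x,y],\ \{x,n\}-\{y,m\}+\Omega_{\tau}(x,y)\big)_{\tau}
\]
of Remark \ref{Altbr} and comparing $M$-components, the action terms and the cocycle $\Omega_{\tau}$ cancel, leaving exactly the identity $\delta D=0$; hence $D\in Z^1(L,M)$. Conversely, for any $D\in Z^1(L,M)$ the formula $\Psi_D(X)=(x,m+D(x))_{\tau}$ defines an $A$-module map commuting with $\alpha'$, preserving $[-,-]'$ and the anchor, and fixing $(\mathcal{M},\beta)$ and $(\mathcal{L},\alpha)$; this reverses the assignment, so $\Psi\mapsto D$ is a bijection onto $Z^1(L,M)$. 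I expect this verification to be the main obstacle, since isolating $\delta D=0$ from the cancellation and checking that every cocycle yields a genuine hom-Lie-Rinehart automorphism both require careful bookkeeping of signs and of the action terms $\{x,n\}-\{y,m\}$.

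Finally I would identify the target with cohomology and confirm the group structure. Because the cochain complex is defined as $C^*(L;M)=\bigoplus_{n\ge 1}C^n(L;M)$ with no term in degree $0$, there are no $1$-coboundaries, so $B^1=0$ and $H^1_{hLR}(L,M)=Z^1_{hLR}(L,M)$; thus the bijection above already lands in $H^1_{hLR}(L,M)$. A short computation using $\sigma\circ\Psi_{D_2}=\sigma$ gives $\Psi_{D_1}\circ\Psi_{D_2}=\Psi_{D_1+D_2}$, with $\Psi_0=\mathrm{Id}$ and $\Psi_D^{-1}=\Psi_{-D}$, so $\Psi\mapsto D$ is an isomorphism from the group of automorphisms of the extension onto the abelian group $H^1_{hLR}(L,M)$, as claimed.
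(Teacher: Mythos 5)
Your proposal is correct and follows essentially the same route as the paper: fix the $A$-module splitting $\tau$, write an automorphism as $(x,m)_{\tau}\mapsto(x,m+\psi(x))_{\tau}$, and show that the structural constraints force $\psi\in C^1(L,M)$ with $\delta\psi=0$, the converse being the same formula. Your additional observations that $H^1_{hLR}(L,M)=Z^1$ (since the complex has no degree-$0$ term) and that the correspondence is a group isomorphism via $\Psi_{D_1}\circ\Psi_{D_2}=\Psi_{D_1+D_2}$ are welcome details the paper leaves implicit.
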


\begin{proof}
Let $F: (\mathcal{L}^{\prime},\alpha^{\prime}) \rightarrow (\mathcal{L}^{\prime},\alpha^{\prime}) $ be a hom-Lie-Rinehart algebra isomorphism which gives an automorphism of the extension
$$\begin{CD}
(\mathcal{M},\beta)@>i>> (\mathcal{L}^{\prime},\alpha^{\prime}) @> \sigma>>(\mathcal{L},\alpha)~~~.
\end{CD}$$
So we have the following commutative diagram:
$$\begin{CD}
(\mathcal{M},\beta)@>i>> (\mathcal{L}^{\prime},\alpha^{\prime}) @> \sigma>>(\mathcal{L},\alpha)\\
  @|  @V F VV @|\\
(\mathcal{M},\beta)@>i>> (\mathcal{L}^{\prime},\alpha^{\prime}) @> \sigma>>(\mathcal{L},\alpha).
\end{CD}$$ 
Since the given short exact sequence is $A$-split,  we have a section $\tau:(L,\alpha)\rightarrow (L^{\prime},\alpha^{\prime})$ (see Remark 2.12.) For the section $\tau$, we get an isomorphism of underlying $A$-modules, say $G: L^{\prime}\rightarrow L\oplus M$. Let us denote the inverse image $\tau(x)+i(m)$ of $(x,m)$ under the isomorphism, by $(x,m)_{\tau}$ for all $x\in L,~m\in M$.

Assume $F_1,~F_2$ are maps obtained by taking projections of the map $G\circ F$ onto first and second components. Then $F(x,m)_{\tau}=(F_1(x,m)_{\tau},F_2(x,m)_{\tau})_{\tau}$ for $(x,m)_{\tau}\in L^{\prime}$. By the commutativity of the above diagram $F_1((x,m)_{\tau})=x$ and $F_2((0,m)_{\tau})=m$. Therefore, for $(x,m)_{\tau}\in L^{\prime}$,
$$(x,F_2((x,0)_{\tau})+m)_{\tau}=(x,F_2((x,m)_{\tau}))_{\tau}.$$

Define $\psi:L\rightarrow M$ by $\psi(x)=F_2((x,0)_{\tau})$. So, we can  write $F_2((x,m)_{\tau})=\psi(x)+m$. Note that $\psi(a.x)=a.\psi(x)$ for all $x\in L,~a\in A$ and $\psi\circ\alpha=\beta\circ\psi$, i.e. $\psi$ is a 1-cochain in $C^1(L,M)$. Now, $F: (\mathcal{L}^{\prime},\alpha^{\prime}) \rightarrow (\mathcal{L}^{\prime},\alpha^{\prime}) $ is an isomorphism of hom-Lie Rinehart algebras, i.e. we have 
$$F[(x,0)_{\tau},(y,0)_{\tau}]^{\prime}=[F(x,0)_{\tau},F(y,0)_{\tau}]^{\prime}.$$

Using the Remark \ref{Altbr}, we get $\delta\psi=0.$
Therefore $\psi$ is a 1-cocycle representing a cohomology class in $H^1_{hLR}(L,M)$. 

Conversely, assume that $\psi$ represents a cohomology class in $H^1_{hLR}(L,M)$.

Define $F:(\mathcal{L}^{\prime},\alpha^{\prime}) \rightarrow (\mathcal{L}^{\prime},\alpha^{\prime})$ as 
$F((x,m)_{\tau})=(x,m+\psi(x))_{\tau}.$

Note that the following identities are satisfied:
\begin{enumerate}
\item $F:L^{\prime}\rightarrow L^{\prime}$ is an $A$-module homomorphism. 

\item $\alpha^{\prime}\circ F=F\circ\alpha^{\prime}$, which follows by using the equations $\psi\circ\alpha=\beta\circ\psi$ and $\alpha^{\prime}\circ i = i\circ\beta.$ 

\item $F[(x,m)_{\tau},(y,n)_{\tau}]^{\prime}=[F(x,m)_{\tau},F(y,n)_{\tau}]$, which follows from the condition $\delta \psi(x,y)=0$.

\item $\rho^{\prime}\circ F=\rho^{\prime}$, as $\rho^{\prime}(x,m)_{\tau}=\rho(x).$

\item $\sigma\circ F=\sigma$ and $F\circ i=i$, which follows from the definition of $F$.
\end{enumerate}
Hence, the map $F:(\mathcal{L}^{\prime},\alpha^{\prime}) \rightarrow (\mathcal{L}^{\prime},\alpha^{\prime})$ is an automorphism of hom-Lie Rinehart algebra $(\mathcal{L}^{\prime},\alpha^{\prime})$.
\end{proof}

\begin{Rem}
In Section $2$ of \cite{Harr}, a similar result appeared for an associative and commutaive algebra $A$ by showing that  the first Harrison cohomology space of $A$ with coefficients in a module $M$ can be interpreted as the set of automorphisms of any given extension of $A$ by $M$. Also, in Chapter $7$ of \cite{Weibel}, the space of derivations of a Lie algebra $L$ into a $L$-module $M$, is identified by the space of automorphisms of trivial extensions of $L$ by $M$ or in other words by a subgroup of $Aut(L\rtimes M)$, consisting of those automorphisms which stabilize $L$ and $M$. Note that for a Lie algebra $L$, first Chevalley-Eilenberg cohomology space with coefficients in module $M$ is the space of outer derivations of $L$ into $M$, but here $H^1_{hLR}(L,M)\cong Der(L,M)$.
\end{Rem}

\subsection{Central Extensions:}
Define the center of a hom-Lie-Rinehart algebra $(\mathcal{L},\alpha)$ by
$$Z_A (\mathcal{L})=\{x\in L : [a.x,z]=[a.\alpha(x),z]=0,~\mbox{and} ~x(a)=0~~\mbox{for all}~a\in A,~z\in L \}.$$
 \begin{Def}
A short exact sequence of hom-Lie-Rinehart algebras 
$$\begin{CD}
(\mathcal{M},\beta)@>i>> (\mathcal{L}^{\prime},\alpha^{\prime}) @> \sigma>>(\mathcal{L},\alpha)
\end{CD}$$
is called central extension of $(\mathcal{L},\alpha)$ if $i(M)=Ker(\sigma)\subset Z_A(\mathcal{L}^{\prime}) $. Here, $(\mathcal{M},\beta)$ is a hom-Lie-Rinehart algebra $(A,M,[-,-]_M,\phi,\beta,\rho_M)$. 
\end{Def}
\begin{Rem}\label{Cent}
\begin{enumerate}
\item Since $\sigma\circ i=0$, we have $\rho_M=0$.
\item Note that $i(M)=Ker(\sigma)\subset Z_A(\mathcal{L}^{\prime})$ implies $(\mathcal{M},\beta)$ is a hom-Lie-Rinehart algebra with trivial bracket. Since $i[m,n]_M=[i(m),i(n)]^{\prime}=0$ and $i$ is an injective
map.
\item If $(M,\beta)$ is a trivial hom-Lie-Rinehart algebra module over $(\mathcal{L},\alpha)$, then an abelian extension of $(\mathcal{L},\alpha)$ by the module $(M,\beta)$ is a central extension.  
\end{enumerate}
\end{Rem}
\begin{Prop}\label{EQC}
There is a one-to-one correspondence between the equivalence classes of $A$-split central extensions
$$\begin{CD}
(\mathcal{M},\beta)@>i>> (\mathcal{L}^{\prime},\alpha^{\prime}) @> \sigma>>(\mathcal{L},\alpha)
\end{CD}$$
of $(\mathcal{L},\alpha)$ by $(\mathcal{M},\beta):=(A,M,[-,-]_M,\phi,\beta,\rho_M)$ and the cohomology classes in $H^2_{hLR}(L,M)$, where $(M,\beta)$ is a trivial hom-Lie-Rinehart algebra module over $(\mathcal{L},\alpha)$ .
\end{Prop}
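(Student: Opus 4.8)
The plan is to deduce this proposition directly from Theorem \ref{Char1} by identifying $A$-split central extensions with exactly those $A$-split abelian extensions of $(\mathcal{L},\alpha)$ by $(M,\beta)$ in which the induced $\mathcal{L}$-module structure on $M$ is trivial. By Remark \ref{Cent}(3), once $(M,\beta)$ is a trivial module every abelian extension is automatically central, so one half of this identification is immediate; the work is to establish the converse and then to check that the bijection of Theorem \ref{Char1} restricts correctly.

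First I would show that an arbitrary $A$-split central extension
$$\begin{CD}
(\mathcal{M},\beta)@>i>> (\mathcal{L}^{\prime},\alpha^{\prime}) @> \sigma>>(\mathcal{L},\alpha)
\end{CD}$$
is an $A$-split abelian extension by the trivial module. Since $i(M)=\ker(\sigma)\subset Z_A(\mathcal{L}^{\prime})$, the definition of the center gives $[a.i(m),x]^{\prime}=[a.\alpha^{\prime}(i(m)),x]^{\prime}=0$ and $\rho^{\prime}(i(m))=0$ for all $x\in L^{\prime}$, $m\in M$, $a\in A$. In particular the action induced by the extension, $\{\sigma(x),m\}:=i^{-1}([x,i(m)]^{\prime})$, vanishes, so the $\mathcal{L}$-module structure on $M$ is trivial, and by Remark \ref{Cent}(2) the bracket on $M$ is trivial as well. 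Consequently the abelian-extension identity $[i(m),x]^{\prime}=i((\sigma(x)).m)$ reduces to $0=0$, and the sequence is indeed an $A$-split abelian extension by the trivial module $(M,\beta)$.

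With this identification, I would invoke Theorem \ref{Char1} for the fixed trivial module $(M,\beta)$, obtaining a one-to-one correspondence between equivalence classes of $A$-split abelian extensions and $H^2_{hLR}(L,M)$. It then remains to confirm that this bijection restricts to the class of central extensions. On the cohomology side one uses the explicit extension built in the proof of Theorem \ref{Char1}: setting $\{x,n\}=\{y,m\}=0$ collapses the bracket to $[(x,m),(y,n)]^{\prime}=([x,y],f(x,y))$, whence $i(M)=0\oplus M$ lies in $Z_A(\mathcal{L}^{\prime})$; thus \emph{every} class of $H^2_{hLR}(L,M)$ produces a central extension, so no cocycles are discarded. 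On the extension side one checks that the isomorphism $F(x,m)=(x,m+g(x))$ realising the equivalence $f\sim f^{\prime}$ preserves centrality, so two central extensions are equivalent as central extensions precisely when they are equivalent as abelian extensions.

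Since the verifications of cocycle identities, well-definedness of $\Omega_{\tau}$, and section-independence were all carried out in Theorem \ref{Char1}, the proposition is essentially a corollary and no new long computation is needed. The step I expect to require the most care is the matching of the two equivalence relations, i.e.\ confirming that the bijection of Theorem \ref{Char1} carries the subclass of central extensions bijectively onto \emph{all} of $H^2_{hLR}(L,M)$ (with trivial coefficients) rather than a proper subset; this is exactly the content of the two centrality checks indicated above.
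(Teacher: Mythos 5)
Your proposal is correct and follows essentially the same route as the paper: the paper's proof is a one-line appeal to Remark \ref{Cent}(3) together with Theorem \ref{Char1}, identifying $A$-split central extensions with $A$-split abelian extensions by the trivial module. You simply spell out the two centrality checks (that a central extension induces the trivial module structure, and that every cocycle class yields a central extension) which the paper leaves implicit.
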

\begin{proof}
Here we follow the Remark \ref{Cent}  $(3)$ and we get that $H^2_{hLR}(L,M)$ classifies the $A$-split central extensions of $(\mathcal{L},\alpha)$ by $(\mathcal{M},\beta)$.
\end{proof}
\begin{Rem}
If we consider $\alpha=Id_L$, then the category $hLR^{\phi}_A$ is  the category of Lie-Rinehart algebras over $A$. In this case, Proposition \ref{EQC} gives the correspondence between the isomorphism class of $A$-split central extensions of a Lie-Rinehart algebra $L$ over $A$ by another Lie-Rinehart algebra $M$ over $A$ and the $2$-nd cohomology space $H^2_{Rin}(L,M)$ with coefficients in trivial module $M$. ( This is given in detail in \cite{CentExt} ). 

\end{Rem}

\section{hom-Lie-Rinehart algebras associated to Poisson algebra }
Let A be an $R$-algebra and $\phi$ be an $R$-algebra automorphism. In Definition \ref{DerS}, the notion of $\phi$-derivations of $A$ into an $A$-module $M$ is defined. Here, we define the universal property of $\phi$-derivations and prove the existence of a universal $\phi$-derivation for an $R$-algebra $A$ with an $R$-algebra automorphism. The following definitions and results are obtained from the discussions in classical algebra with derivation (as in  \cite{Kunz}) and from the isomorphism of $A$-modules $Der_{\phi}(A,M)\cong Der_R(A,M)$ (given by $d\mapsto d\circ \phi^{-1}$).  

\begin{Prop}\label{CorrDER}
Let $M$ be an $A$-module, $A\ltimes M$ be the semi-direct product and $d:A\rightarrow M$ be a $\phi$-derivation, then the map $\tilde{d}:A\rightarrow A\ltimes M$ given by 
$$\tilde{d}(a)=(a,d\phi^{-1}(a)) ~\mbox{for} ~a\in A,$$
is an $R$-algebra homomorphism with $\pi_1\circ \tilde{d}=Id_A$, where $\pi_1(a, m)=a$.

Conversely, for an $R$-algebra homomorphism $h:A\rightarrow A\ltimes M$ satisfying $\pi_1\circ \tilde{d}=Id_A$, there is a unique $\phi$-derivation $d:A\rightarrow M$ with $h=\tilde{d}$. 
\end{Prop}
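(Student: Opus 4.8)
The plan is to reduce the statement to the classical correspondence between ordinary ($R$-linear) derivations and algebra-section of a square-zero extension, transported through the twist $d\mapsto d\circ\phi^{-1}$ recorded just before the proposition. Recall that the semi-direct product $A\ltimes M$ has underlying $R$-module $A\oplus M$ and multiplication $(a,m)(b,n)=(ab,\,a\cdot n+b\cdot m)$, so that $M$ is a square-zero ideal and $\pi_1:A\ltimes M\to A$, $(a,m)\mapsto a$, is an $R$-algebra homomorphism. Both directions of the proposition will then be pure bookkeeping of the $\phi/\phi^{-1}$ twist.

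For the forward direction, I would first note that $\tilde{d}$ is $R$-linear (being the identity in the first slot plus the $R$-linear map $d\circ\phi^{-1}$ in the second) and that $\pi_1\circ\tilde{d}=\mathrm{Id}_A$ is immediate. The substantive point is multiplicativity. Computing $\tilde{d}(a)\tilde{d}(b)=(ab,\,a\cdot d\phi^{-1}(b)+b\cdot d\phi^{-1}(a))$ and comparing with $\tilde{d}(ab)=(ab,\,d\phi^{-1}(ab))$, it suffices to verify
$$d\phi^{-1}(ab)=a\cdot d\phi^{-1}(b)+b\cdot d\phi^{-1}(a).$$
Here I would use that $\phi^{-1}$ is an algebra homomorphism to write $\phi^{-1}(ab)=\phi^{-1}(a)\phi^{-1}(b)$, apply the $\phi$-derivation identity of Definition \ref{DerS} to $d$, and cancel $\phi\circ\phi^{-1}=\mathrm{Id}_A$; this is exactly the place where the twist by $\phi^{-1}$ in the definition of $\tilde{d}$ is engineered so that the $\phi$-twisted Leibniz rule reproduces the untwisted one. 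Unit preservation, $\tilde{d}(1)=(1,0)$, follows from $d(1)=0$, which is itself forced by $d(1)=d(1\cdot 1)=2\,\phi(1)d(1)=2\,d(1)$ together with $\phi(1)=1$.

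For the converse, given an $R$-algebra homomorphism $h:A\to A\ltimes M$ with $\pi_1\circ h=\mathrm{Id}_A$, I would write $h(a)=(a,\eta(a))$, where $\eta:A\to M$ is the second-coordinate component of $h$ and hence $R$-linear. Expanding $h(ab)=h(a)h(b)$ in $A\ltimes M$ and comparing second components yields $\eta(ab)=a\cdot\eta(b)+b\cdot\eta(a)$, i.e. $\eta$ is an ordinary derivation of $A$ into $M$. Setting $d:=\eta\circ\phi$ and using that $\phi$ is an algebra homomorphism, a direct computation gives $d(ab)=\phi(a)d(b)+\phi(b)d(a)$, so $d\in Der_{\phi}(A,M)$; moreover $d\circ\phi^{-1}=\eta$ shows $\tilde{d}=h$. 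Uniqueness is immediate, since $\tilde{d'}=h$ forces $d'\circ\phi^{-1}=\eta=d\circ\phi^{-1}$, whence $d'=d$ because $\phi^{-1}$ is bijective.

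There is no genuine obstacle here; the entire content is the correct threading of the $\phi/\phi^{-1}$ twist, and the only mild subtlety worth isolating is that the $\phi$-derivation $d$ must be twisted on the right side so that $\tilde{d}$ lands on the section $a\mapsto(a,d\phi^{-1}(a))$ rather than $a\mapsto(a,d(a))$. Reading the proposition as the composite of the bijection $Der_{\phi}(A,M)\cong Der_R(A,M)$, $d\mapsto d\circ\phi^{-1}$, with the classical identification of $Der_R(A,M)$ with algebra-homomorphism sections of $\pi_1$ makes both directions transparent and also explains why unit preservation reduces to $d(1)=0$.
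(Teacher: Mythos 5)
Your proof is correct and follows essentially the same route as the paper's: the forward direction is the direct computation of $\tilde{d}(ab)=\tilde{d}(a)\tilde{d}(b)$ via the $\phi$-twisted Leibniz rule, and the converse defines $d=h_1\circ\phi$ from the second component of $h$, exactly as in the paper. You merely supply more of the intermediate bookkeeping (the reduction to the untwisted Leibniz identity, $d(1)=0$, and uniqueness) that the paper leaves implicit.
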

\begin{proof}
Let $a,b\in A$. From the definition of $\tilde{d}$ and $\pi$ we get 
$$\tilde{d}(ab)=(ab,d\phi^{-1}(ab))=(a,d\phi^{-1}(a))(b,d\phi^{-1}(b))=\tilde{d}(a)\tilde{d}(b),$$
and $\pi_1\circ \tilde{d}=Id_A$.

Conversely, let $h:A\rightarrow A\ltimes M$ be an $R$-algebra homomorphism satisfying $\pi_1\circ \tilde{d}=Id_A$. Then for $a\in A$, we can write 
$h(a)=(a,h_1(a))$
where $h_1:A\rightarrow M$ is an $R$-linear map. Define $d:A\rightarrow M$ by
$$d(a)=h_1(\phi(a)) ~\mbox{for} ~a\in A.$$ 

It follows that $d$ is a $\phi$-derivation and the map $\tilde{d}=h$.
\end{proof}
\begin{Def}
Let $M$ be an $A$-module. A $\phi$-derivation $d:A\rightarrow M$ is called universal, if for a given $A$-module $N$ and a $\phi$-derivation $\delta : A\rightarrow N$, there is a unique $A$-module homomorphism $f:M\rightarrow N$ with $\delta=f\circ d$.
\end{Def}
Let $M_1,~M_2$ be $A$-modules. If $d_1:A\rightarrow M_1$ and $d_2: A\rightarrow M_2$ are universal $\phi$-derivations, then it follows that $M_1$ and $M_2$ are isomorphic as $A$-modules, so the universal $\phi$-derivation of $R$-algebra $A$ is unique up to isomorphism. Next, we describe the existence of such a universal $\phi$-derivation.
\begin{Thm}
For any $R$-algebra $A$ with an $R$-algebra automorphism $\phi$, there exists a universal $\phi$-derivation.
\end{Thm}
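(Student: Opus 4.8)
The plan is to reduce the statement to the classical existence of a universal $R$-derivation and then transport that universal object across the isomorphism $Der_{\phi}(A,M)\cong Der_R(A,M)$ recalled just before the theorem. Recall from the classical theory (as in \cite{Kunz}) that every $R$-algebra $A$ admits a universal $R$-derivation $d_{A/R}:A\rightarrow \Omega_{A/R}$ into the module of Kähler differentials: for every $A$-module $N$ and every ordinary $R$-derivation $D:A\rightarrow N$ there is a unique $A$-module homomorphism $f:\Omega_{A/R}\rightarrow N$ with $D=f\circ d_{A/R}$. Equivalently, $\Omega_{A/R}$ represents the functor $N\mapsto Der_R(A,N)$ via $Der_R(A,N)\cong Hom_A(\Omega_{A/R},N)$.

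First I would propose the candidate universal $\phi$-derivation $d:=d_{A/R}\circ\phi:A\rightarrow \Omega_{A/R}$. A direct check, using only that $\phi$ is an algebra map, shows $d$ is a $\phi$-derivation: for $a,b\in A$ one has $d(ab)=d_{A/R}(\phi(a)\phi(b))=\phi(a)\,d_{A/R}(\phi(b))+\phi(b)\,d_{A/R}(\phi(a))=\phi(a)d(b)+\phi(b)d(a)$. From the categorical viewpoint, $d$ is simply the image of $d_{A/R}$ under the inverse $D\mapsto D\circ\phi$ of the stated isomorphism $Der_{\phi}(A,\Omega_{A/R})\cong Der_R(A,\Omega_{A/R})$.

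Next I would verify universality. Given any $A$-module $N$ and any $\phi$-derivation $\delta:A\rightarrow N$, the map $\delta\circ\phi^{-1}:A\rightarrow N$ is an ordinary $R$-derivation (this is exactly the content of the stated isomorphism, and is where the hypothesis that $\phi$ is an automorphism enters, so that $\phi^{-1}$ is available). By the universal property of $d_{A/R}$ there is a unique $A$-module homomorphism $f:\Omega_{A/R}\rightarrow N$ with $\delta\circ\phi^{-1}=f\circ d_{A/R}$; composing on the right with $\phi$ yields $\delta=f\circ d_{A/R}\circ\phi=f\circ d$, the required factorisation. For uniqueness, if $f'$ also satisfies $\delta=f'\circ d$, then $\delta\circ\phi^{-1}=f'\circ d_{A/R}$, whence $f'=f$ by the uniqueness clause for $d_{A/R}$. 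Thus $d=d_{A/R}\circ\phi$ is a universal $\phi$-derivation.

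The argument is essentially an application of representability: the natural isomorphisms $Der_{\phi}(A,-)\cong Der_R(A,-)\cong Hom_A(\Omega_{A/R},-)$ show that the functor $Der_{\phi}(A,-)$ is represented by $\Omega_{A/R}$, with universal element $d_{A/R}\circ\phi$. I do not expect a genuine obstacle here; the only non-formal input is the classical existence of $\Omega_{A/R}$, and the sole point requiring care is that $\phi$ being an automorphism is precisely what upgrades $\delta\mapsto\delta\circ\phi^{-1}$ from a mere map into $Der_R(A,N)$ to a bijection. As an alternative to invoking \cite{Kunz}, one could instead build $\Omega_{A/R}$ by hand as the free $A$-module on symbols $\{\,da:a\in A\,\}$ modulo the $R$-linearity and ordinary Leibniz relations and then twist the canonical derivation by $\phi$, but this merely reproduces the routine classical construction.
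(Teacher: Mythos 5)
Your proof is correct, but it takes a different route from the paper's. You transport the classical universal derivation $d_{A/R}:A\to\Omega_{A/R}$ across the bijection $Der_{\phi}(A,N)\cong Der_R(A,N)$, $\delta\mapsto\delta\circ\phi^{-1}$, and observe that $d_{A/R}\circ\phi$ is then the universal element representing $Der_{\phi}(A,-)$; all the verifications you give (the twisted Leibniz rule for $d_{A/R}\circ\phi$, the fact that $\delta\circ\phi^{-1}$ is an ordinary derivation, and the uniqueness of the factorisation) are sound, and you correctly isolate the automorphism hypothesis as the point where $\phi^{-1}$ is needed. The paper instead carries out the construction from scratch: it takes $I=\ker(\mu:A\otimes A\to A)$, defines $d(a)=\phi(a)\otimes 1-1\otimes\phi(a)+I^2$ on $I/I^2$, and verifies universality directly by extending a given $\phi$-derivation $\delta:A\to N$ to an algebra map $A\otimes A\to A\ltimes N$ via the semidirect-product characterisation of Proposition \ref{CorrDER} and checking it kills $I^2$. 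The two arguments are close in spirit --- indeed the paper's own preamble to this section says its results are ``obtained from the discussions in classical algebra with derivation \ldots and from the isomorphism $Der_{\phi}(A,M)\cong Der_R(A,M)$'', which is exactly your strategy --- but what the explicit construction buys is the generators-and-relations description $D_A^{\phi}$ (symbols $da$ subject to $d(ab)=\phi(a)db+\phi(b)da$) that the paper immediately needs for the bracket formulas on $\phi$-differentials in the Poisson example; your representability argument produces the same module up to isomorphism but leaves that presentation implicit. Your version is shorter and cleanly modular, at the cost of outsourcing existence to the classical theory of K\"ahler differentials.
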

\begin{proof}
Let $\mu:A\otimes A\rightarrow A$ be the multiplication on $A$. Then $I:=Ker \mu$ is generated by the elements of the form $\phi(a)\otimes 1-1\otimes \phi(a)$ where $a\in A$. Note that $A\otimes A$ has an $A$-module structure, given by the ring homomorphism $\psi:A\rightarrow A\otimes A$ defined by $\psi(a)=a\otimes 1$. Now, consider the map
$d: A\rightarrow I/I^2$ given by
 $d(a)=\phi(a)\otimes 1-1\otimes \phi(a)+I^2 ~\mbox{for}~a\in A. $
Then it follows that $d:A\rightarrow I/I^2$ is a $\phi$-derivation.

Let $N$ be an $A$-module and $\delta: A\rightarrow N$ be a $\phi$-derivation. By Proposition \ref{CorrDER}, we have an $R$-algebra homomorphism $\tilde{\delta}:A\rightarrow A\ltimes N$ given by $\tilde{\delta}(a)=(a,\delta\phi^{-1}(a))$ for $a\in A$. If we consider the map $i: A\rightarrow A\ltimes N$ defined by $i(a)=(a,0)$, then we get an $R$-algebra homomorphism $h:A\otimes A\rightarrow A\ltimes N$ defined by
$$h(a\otimes b)=\tilde{\delta}(a)i(b)=(ab,b\delta\phi^{-1}a); $$
for all $a,b\in R$. So, for $x=\phi(a)\otimes 1-1\otimes \phi(a)\in I$, we have 
$h(x)=(0,\delta(a)).$ 

Thus $h$ vanishes on $I^2\subset A\otimes A $. So it  induces an $A$-module homomorphism, say $\tilde{h}: I/I^2\rightarrow N$ such that $\tilde{h}(da)=\delta(a)$ for all $a\in A$. Consequently, the map $d:A\rightarrow I/I^2$ is a universal $\phi$-derivation.
\end{proof}

We now consider the $A$-module $D_A^{\phi}$ generated by the symbols $da$, for $a\in A$ subject to the following relations:
\begin{enumerate}
\item $d(\lambda a+\mu b)=\lambda da+\mu db$;
\item $d(ab)=\phi(a)db+ \phi(b)da$;
\end{enumerate}
for all $a,b\in A,~\mbox{and}~\lambda, \mu\in R$. Then the map $d:A\rightarrow D_A^{\phi}$ is a universal $\phi$-derivation. Hence by universal property $D_A^{\phi}\cong I/I^2$.

Let us call $D_A^{\phi}$ the $A$-module of formal $\phi$-differentials of the pair $(A,\phi)$. Observe that for any $\phi$-derivation $d_1:A\rightarrow M$, one gets a derivation $\delta_1=d_1\circ \phi^{-1}:A\rightarrow M$. So, there is a correspondence between a universal $\phi$-derivation $d:A\rightarrow D_A^{\phi}$ and the universal derivation $\delta: A\rightarrow D_A$, where $D_A$ is the module of K\"ahler differentials. Also, $D_A^{\phi}\cong D_A$ as $A$-modules. Moreover, the universal property of the $\phi$-derivation $d:A\rightarrow D_A^{\phi}$ gives the following result.

\begin{Prop}\label{PDerIso}
Let  $d:A\rightarrow D_A^{\phi}$ is a universal $\phi$-derivation and $M$ be an $A$-module, then the canonical map $\Phi: Hom_A(D^{\phi}_A,M)\rightarrow Der_{\phi}(A,M)$ given by $\Phi(f)=f\circ d$ where $f\in Hom_A(D^{\phi}_A,M) $,  is an $A$-module isomorphism. In the particular case if $M=A$ then we have a canonical isomorphism of $A$-modules 
$$ Hom_A(D_A^{\phi},A)\cong Der_{\phi}(A).$$ 
\end{Prop}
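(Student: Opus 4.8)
The plan is to read the isomorphism directly off the universal property of $d:A\rightarrow D_A^{\phi}$, exactly as one does for K\"ahler differentials in the classical setting (cf. \cite{Kunz}); the statement is really just a formal repackaging of that property. First I would check that $\Phi$ is well defined, i.e.\ that $\Phi(f)=f\circ d$ is genuinely a $\phi$-derivation whenever $f\in Hom_A(D_A^{\phi},M)$. This is the only computational point: using the defining relation $d(ab)=\phi(a)db+\phi(b)da$ in $D_A^{\phi}$ together with the $A$-linearity of $f$, one obtains
$$(f\circ d)(ab)=f\big(\phi(a)db+\phi(b)da\big)=\phi(a)(f\circ d)(b)+\phi(b)(f\circ d)(a),$$
so $f\circ d\in Der_{\phi}(A,M)$. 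The additivity and $A$-linearity of $\Phi$ itself are immediate from $(f+g)\circ d=f\circ d+g\circ d$ and $(af)\circ d=a(f\circ d)$.

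Next I would establish injectivity. Since $D_A^{\phi}$ is generated as an $A$-module by the symbols $da$ for $a\in A$, any $A$-module homomorphism out of $D_A^{\phi}$ is determined by its values on these generators. Hence if $\Phi(f)=f\circ d=0$, then $f(da)=(f\circ d)(a)=0$ for every $a\in A$, and $A$-linearity forces $f=0$; thus $\Phi$ is injective.

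Finally, surjectivity is precisely the universal property of the $\phi$-derivation $d$: given any $\phi$-derivation $\delta:A\rightarrow M$, there exists a (unique) $A$-module homomorphism $f:D_A^{\phi}\rightarrow M$ with $\delta=f\circ d=\Phi(f)$. Combining the three steps, $\Phi$ is an $A$-module isomorphism. The special case $M=A$ then follows at once, since $Der_{\phi}(A,A)$ is by definition $Der_{\phi}(A)$, giving the canonical isomorphism $Hom_A(D_A^{\phi},A)\cong Der_{\phi}(A)$.

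I do not expect a genuine obstacle here. The one place demanding any care is the well-definedness check above, namely confirming that composition with $d$ lands inside $Der_{\phi}(A,M)$ rather than in some larger space of $R$-linear maps; everything else is a direct transcription of the universal property (surjectivity) and of the fact that the $da$ generate $D_A^{\phi}$ (injectivity).
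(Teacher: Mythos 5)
Your proposal is correct and follows the same route the paper intends: the paper offers no separate proof, simply asserting that the statement is a consequence of the universal property of the $\phi$-derivation $d:A\rightarrow D_A^{\phi}$, and your three steps (well-definedness via the relation $d(ab)=\phi(a)db+\phi(b)da$, injectivity from the generators $da$, surjectivity from the universal property) are exactly the standard unpacking of that assertion.
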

Let $(A,\mu,[-,-]_A)$ be a Poisson algebra and $\phi:A\rightarrow A$ be a Poisson algebra automorphism. Then $(A,\mu,\{-,-\},\phi)$ is a purely hom-Poisson algebra, where $\{-,-\}=\phi\circ[-,-]_A$. And the map $\{a,-\}:A\rightarrow A$ is a $\phi$-derivation of $A$. Thus we obtain an alternating $A$-bilinear map $$\pi:D_A^{\phi}\otimes_A D_A^{\phi}\rightarrow A $$ given by
$$\pi(adx,bdy)=ab\{x,y\}$$
for all $x,y,a,b\in A$. The map $\pi$ induces an $A$-linear map $\pi^{\prime}:D_A^{\phi}\rightarrow Hom_A(D_A^{\phi},A)$. Also, by using Proposition \ref{PDerIso}, we obtain an $A$-linear map 
\begin{equation*}
\pi^{*}:D_A^{\phi}\rightarrow Der_{\phi}(A)
\end{equation*}
defined by $\pi^{*}(dx)(a)=\pi(dx,da)~~\mbox{for all}~a,x\in A.$ 

Now consider a bilinear map $[-,-]:D_A^{\phi}\otimes D_A^{\phi}\rightarrow D_A^{\phi}$ given by
 $$[adx,bdy]=\phi(a)\phi(b)d\{x,y\}+\phi(a)\{\phi(x),b\}d\phi(y)-\phi(b)\{\phi(y),a\}d\phi(x)$$
 and define a linear map $\tilde{\alpha_{\phi}}:D_A^{\phi}\rightarrow D_A^{\phi}$ given by 
$$\tilde{\alpha_{\phi}}(adx)=\phi(a)d(\phi(x)).$$
Then the resulting tuple  $(D_A^{\phi},[-,-],\tilde{\alpha_{\phi}})$ is a hom-Lie algebra; where the required hom-Jacobi identity for the bracket boils down to the following equation
$$[[dx,dy],d\phi(z)]+[[dy,dz],d\phi(x)]+[[dz,dx],d\phi(y)]=0,$$
which follows from the hom-Jacobi identity for the bracket $\{-,-\}$. If we now consider the map $\rho_{\phi}:D_A^{\phi}\rightarrow Der_{\phi}(A)$ defined by $$\rho_{\phi}=\pi^{*}\circ \tilde{\alpha_{\phi}},$$ then it follows that $(\rho_{\phi},\phi)$ is a representation of the hom-Lie algebra $(D_A^{\phi},[-,-],\tilde{\alpha_{\phi}})$ on $A$. Furthermore, for any $\xi_1,\xi_2\in D_A^{\phi}$ and $a\in A$, we can deduce that
$$[\xi_1,a.\xi_2]=\phi(a)[\xi_1,\xi_2]+\rho_{\phi}(\xi_1)(a).\tilde{\alpha_{\phi}}(\xi_2).$$ 
As a result, the above discussion gives us a hom-Lie-Rinehart algebra structure on the $A$-module of $\phi$-differentials.
\begin{Thm}\label{ConstExm}
Let $(A,\mu,[-,-]_A)$ be a Poisson algebra , $\phi:A\rightarrow A$ be a Poisson algebra automorphism and
 $(A,\mu,\{-,-\},\phi)$ be the purely hom-Poisson algebra, obtained by composition then $(A,D_A^{\phi},[-,-],\phi,\tilde{\alpha_{\phi}},\rho_{\phi})$ is a hom-Lie-Rinehart algebra over $(A,\phi)$ where
\begin{enumerate}
\item the endomorphism $\tilde{\alpha_{\phi}}:D_A^{\phi}\rightarrow D_A^{\phi}$ is given by $\tilde{\alpha_{\phi}}(adx)=\phi(a)d(\phi(x));$
\item the anchor map $\rho_{\phi}:D_A^{\phi}\rightarrow Der_{\phi}(A)$ is given by $\rho_{\phi}=\pi^{*}\circ \tilde{\alpha_{\phi}};$
\item the bracket is given by
 $$[adx,bdy]=\phi(a)\phi(b)d\{x,y\}+\rho_{\phi}(adx)(b)\tilde{\alpha_{\phi}}(dy)-\rho_{\phi}(bdy)(a)\tilde{\alpha_{\phi}}(dx);$$
\end{enumerate} 
for all $a,b,x,y\in A$.
\end{Thm}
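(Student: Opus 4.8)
The plan is to verify the five conditions of Definition \ref{hom-LR} for the tuple $(A, D_A^\phi, [-,-], \phi, \tilde{\alpha_\phi}, \rho_\phi)$, drawing every identity from the purely hom-Poisson structure $(A,\mu,\{-,-\},\phi)$ with $\{-,-\}=\phi\circ[-,-]_A$. Since $D_A^\phi$ is presented by generators $da$ subject to $R$-linearity and the twisted Leibniz relation $d(ab)=\phi(a)db+\phi(b)da$, the first and most delicate task is to confirm that each structure map is well-defined, i.e. descends through these relations. For the alternating pairing $\pi$ this is exactly the fact that each $\{x,-\}$ is a $\phi$-derivation of $A$ (a consequence of the purely hom-Poisson axioms), and this in turn makes $\pi^*$ and hence $\rho_\phi=\pi^*\circ\tilde{\alpha_\phi}$ well-defined through Proposition \ref{PDerIso}. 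For $\tilde{\alpha_\phi}$ and the bracket $[-,-]$ I would check well-definedness by evaluating each map on a generator $d(ab)$ in every slot and matching the two resulting expressions using the $\phi$-derivation property of $\{-,-\}$ together with the multiplicativity of $\phi$.

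Next I would establish condition (1), that $(D_A^\phi,[-,-],\tilde{\alpha_\phi})$ is a hom-Lie algebra. Skew-symmetry is immediate from the bracket formula, and the multiplicativity $\tilde{\alpha_\phi}[\xi_1,\xi_2]=[\tilde{\alpha_\phi}(\xi_1),\tilde{\alpha_\phi}(\xi_2)]$ follows by applying $\tilde{\alpha_\phi}$ to the defining formula and comparing term by term. For the hom-Jacobi identity, the crucial reduction is to use $A$-bilinearity together with the hom-Leibniz rule to bring the cyclic sum on arbitrary elements $a\,dx$, $b\,dy$, $c\,dz$ down to the case of exact differentials, where it collapses to
$$[[dx,dy],d\phi(z)]+[[dy,dz],d\phi(x)]+[[dz,dx],d\phi(y)]=0,$$
and this is precisely the $d$-image of the hom-Jacobi identity for $\{-,-\}$.

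With the hom-Lie algebra in hand, the remaining module-compatibility conditions are short. Conditions (2) and (4), namely $\tilde{\alpha_\phi}(a.\xi)=\phi(a).\tilde{\alpha_\phi}(\xi)$ and $\rho_\phi(a.\xi)=\phi(a).\rho_\phi(\xi)$, follow by direct computation on generators $a\,dx$ using that $\phi$ is an algebra homomorphism. Condition (5), the hom-Leibniz identity $[\xi_1,a.\xi_2]=\phi(a)[\xi_1,\xi_2]+\rho_\phi(\xi_1)(a).\tilde{\alpha_\phi}(\xi_2)$, is exactly the relation recorded immediately before the statement and may be quoted. Condition (3), that $(\rho_\phi,\phi)$ is a representation of the hom-Lie algebra on $A$, was likewise obtained in the preceding discussion; I would confirm the two identities of Definition \ref{Rep-hom-Lie} by unwinding $\rho_\phi=\pi^*\circ\tilde{\alpha_\phi}$ and invoking the corresponding properties of $\pi$ and of the bracket $\{-,-\}$.

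I expect the main obstacle to be concentrated in the early steps: proving that the bracket $[-,-]$ is well-defined against the twisted Leibniz relation, and then verifying the hom-Jacobi identity. Both rest on the single input that $\{-,-\}$ is a hom-Poisson bracket for which every $\{x,-\}$ is a $\phi$-derivation; once the reduction to exact differentials $dx,dy,dz$ is in place, the hom-Jacobi identity for $[-,-]$ is a formal consequence of the one for $\{-,-\}$, but tracking the $\phi$-twists through the $A$-bilinear extension is where the care is required.
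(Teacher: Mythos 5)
Your proposal is correct and follows essentially the same route as the paper: the paper's argument is precisely the discussion preceding the theorem, which builds $\pi$, $\pi^*$ and $\rho_\phi=\pi^*\circ\tilde{\alpha_\phi}$ from the $\phi$-derivation property of $\{x,-\}$, reduces the hom-Jacobi identity to the displayed identity on exact differentials $dx,dy,dz$ (where it is the $d$-image of the hom-Jacobi identity for $\{-,-\}$), and then records the hom-Leibniz relation and the representation property. Your added attention to well-definedness of the bracket and of $\tilde{\alpha_\phi}$ against the twisted Leibniz relation presenting $D_A^\phi$ is a sensible elaboration of a point the paper leaves implicit, not a different method.
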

\begin{Rem}
In particular, if $A$ is a Poisson algebra and $\phi=Id_A$, then the above construction gives a Lie-Rinehart algebra structure on $D_A$, the $A$-module of formal differentials (or the module of \textbf{K\"ahler differentials}). This is given by J. Huebschmann in \cite{Hueb1}. 
\end{Rem}

\begin{Rem}
The hom-Lie-Rinehart algebra appeared in Theorem \ref{ConstExm} has an invertible endomorphism since $\phi:A\rightarrow A$ is a Poisson algebra automorphism. This kind of hom-Lie-Rinehart algebras are called regular hom-Lie-Rinehart algebras in \cite{New} and a new cochain complex is defined for the regular case,  in which the bilinear map $\pi:D_A^{\phi}\otimes D_A^{\phi}\rightarrow D_A^{\phi}$
is a $2$-cocycle.
\end{Rem}
We can rewrite the hom-Lie bracket on $D_A^{\phi}$ in terms of Lie derivatives.
First let us define an operator $L_X:D_A^{\phi}\rightarrow D_A^{\phi}$  as
\begin{equation}\label{LieDerivative}
L_X(adf)=\phi(a)d(X(f))+\alpha_{\phi}(X)(a)d(\phi(y))~~\mbox{for}~X\in Der_{\phi}(A).
\end{equation}
Here, the map $\alpha_{\phi}$ is anchor map of the hom-Lie-Rinehart algebra $(A,Der_{\phi}A,[-,-]_{\phi},\alpha_{\phi},\alpha_{\phi})$ given in Example \ref{pderhl}. In terms of this operator $L_X:D_A^{\phi}\rightarrow D_A^{\phi}$,  the bracket in Theorem \ref{ConstExm} can be rewritten as:
$$[\xi_1,\xi_2]=L_{\pi^*(\xi_1)}(\xi_2)-L_{\pi^*(\xi_2)}(\xi_1)-d\pi(\xi_1,\xi_2);$$
for any $\xi_1,\xi_2\in D_A^{\phi}$. We say this operator $L_X:D_A^{\phi}\rightarrow D_A^{\phi}$ to be the Lie derivative with respect to $X\in Der_{\phi}(A)$ . 

\section{Special cases of hom-Lie-Rinehart and hom-Gerstenhaber algebras}
We present a discussion on some special cases of hom-Lie-Rinehart algebra and hom-Gerstenhaber algebra, which show the wide interests and further application of these algebras.

\subsection{Hom-Lie algebras:} Considering a hom-Lie algebra in the category of hom-Lie-Rinehart algebras we get the following:

\textbf{1. Deformation cohomology of a Hom-Lie algebra:}\\
 Let us consider $R$ to be a field. The hom-Lie algebra $(L,[-,-],\alpha)$ is a hom-Lie module over itself by adjoint action. Note that $(L,[-,-],\alpha)$ is also a hom-Lie-Rinehart algebra (Example \ref{hom-Lie}) and a hom-Lie-Rinehart module over itself as the action on $R$ is trivial. Then the cohomology $H^*_{hLR}(L,L)$ is same as the deformation cohomology for a hom-Lie algebra defined in \cite{DefHLIE}. 

\textbf{2.Extensions of a Hom-Lie algebra:} \\
In Section $2.4$, \cite{HLIE01}, A construction of central extension of a Hom-Lie algebra is given. After constructing a certain q-deformation of the Witt algebra, Hartwig, Larsson, and Silvestrov used the machinery of Hom-Lie algebra extensions to construct a corresponding deformation of the Virasoro algebra in Section 4, \cite{HLIE01}. In the classical case of Lie algebras, equivalence classes of abelian (and central) extensions are classified by the 2nd cohomology module \cite{HomAlg}. For hom-Lie algebras, classification of split abelian extensions and central extension can be done by considering them in the category of hom-Lie-Rinehart algebras and then using Theorem \ref{Char1} and Proposition \ref{EQC}.

A short exact sequence 
$$\begin{CD}
0 @>>>(M,0,\beta) @>i>> (L^{\prime},\{-,-\},\alpha^{\prime}) @>\sigma>> (L,[-,-],\alpha)@>>> 0.
\end{CD}$$
is called an extension of the hom-Lie algebra $(L,[-,-],\alpha)$ by the abelian hom-Lie algebra $(M,0, \beta)$. 

Here, by an split extension of hom-Lie algebra $(L,[-,-],\alpha)$, we mean that we have a $R$-linear map $\tau: L \rightarrow L^{\prime}$ such that
\begin{enumerate}
\item $\sigma \circ \tau=Id_L$, and
\item $\tau\circ\alpha = \alpha^{\prime}\circ \tau$,

\end{enumerate}
for each $a\in A,~x\in L$. Furthermore, if the section $\tau$ for $\sigma$ is a homomorphism of hom-Lie algebras then this extension is called split in the category of hom-Lie algebras. Now, one can deduce the following result:
\begin{Prop}
There is a one-to-one correspondence between the equivalence classes of split abelian extensions of a hom-Lie algebra $(L,[-,-],\alpha)$ by $(M,0,\beta)$ and the cohomology classes in $H^2_{hLR}(L,M)$.
\end{Prop}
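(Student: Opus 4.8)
The plan is to deduce this Proposition directly from Theorem \ref{Char1} by realizing the hom-Lie algebra data as a hom-Lie-Rinehart algebra over the trivial base pair $(R, Id_R)$. First I would invoke Example \ref{h-LA} to view $(L, [-,-], \alpha)$ as the hom-Lie-Rinehart algebra $(R, L, [-,-], Id_R, \alpha, \rho)$ with $A = R$, $\phi = Id_R$, and trivial anchor $\rho = 0$; and I would view the abelian hom-Lie algebra $(M, 0, \beta)$, together with the given representation $(\theta, \beta)$ of $(L,[-,-],\alpha)$ on $M$, as the corresponding module object $(\mathcal{M}, \beta)$ in $hLR^{Id_R}_R$. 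One checks that Definition \ref{Mod} collapses to the bare statement that $(\theta, \beta)$ is a representation of the hom-Lie algebra $(L, [-,-], \alpha)$ on $M$: conditions (2) and (3) there are automatic because $\beta$ and the action are $R$-linear and $\phi = Id_R$, while condition (4) holds because $\rho = 0$.

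Next I would check that the two specialized cochain complexes coincide. When $A = R$ and $\phi = Id_R$, the defining constraint $(2)$ on $C^n(L; M)$, namely $f(x_1, \ldots, a.x_i, \ldots, x_n) = \phi^{n-1}(a) f(x_1, \ldots, x_n)$, becomes automatic for $R$-multilinear $f$, so $C^n(L; M)$ reduces to the space of skew-symmetric $R$-multilinear maps $\wedge^n_R L \to M$ satisfying $f \circ \alpha^{\otimes n} = \beta \circ f$, and the coboundary $\delta$ reduces term by term to the hom-Lie coboundary (the anchor does not appear explicitly in the formula). Hence the space $H^2_{hLR}(L, M)$ in the present statement is precisely the second cohomology produced by applying Theorem \ref{Char1} to this data.

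Then I would match the two notions of extension. With $A = R$, an $A$-module map is simply an $R$-linear map, so the three $A$-split conditions $\sigma \circ \tau = Id$, $\tau(a.x) = a.\tau(x)$, and $\tau \circ \alpha = \alpha' \circ \tau$ reduce exactly to the two conditions defining a split extension of hom-Lie algebras stated just before the Proposition, with the middle condition now vacuous. Likewise the abelian-extension condition $[i(m), x] = i(\sigma(x).m)$ and the notion of equivalence of extensions transport verbatim, since every structure map is $R$-linear and every $A$-module constraint is automatic over $A = R$; the zero-decorated short exact sequence of hom-Lie algebras is just the short exact sequence in $hLR^{Id_R}_R$ ($i$ injective, $\sigma$ surjective, $\sigma \circ i = 0$). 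With all the data identified, Theorem \ref{Char1} yields the desired bijection between equivalence classes of split abelian extensions and cohomology classes in $H^2_{hLR}(L, M)$.

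Since the argument is a dictionary check rather than a fresh computation, there is no genuine obstacle; the only point requiring care is verifying that each defining condition in the hom-Lie-Rinehart framework either becomes automatic over $A = R$ or specializes exactly to its hom-Lie counterpart, so that no constraint is silently lost or added in the reduction.
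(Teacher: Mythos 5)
Your proposal is correct and follows exactly the route the paper intends: the paper itself gives no separate proof of this Proposition, stating only that it follows by viewing the hom-Lie algebra as a hom-Lie-Rinehart algebra over $(R,Id_R)$ (Example \ref{h-LA}) and applying Theorem \ref{Char1}. Your dictionary check of the module, cochain, and extension notions is precisely the verification the paper leaves implicit.
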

Similarly, in the case: $L$ acting trivially on $M$, one can characterize the second cohomology space $H^2_{hLR}(L,M)$ by equivalence classes of split central extensions of the hom-Lie algebra $(L,[-,-],\alpha)$ by $(M,0,\beta)$ since if $L$ acts trivially on $M$, then any central extension of $(L,[-,-],\alpha)$ by $(M,0,\beta)$ is an abelian extension. 
 
\subsection{Lie-Rinehart algebras}
Let $(\mathcal{L},\alpha)$ be a hom-Lie-Rinehart algebra then the pair $(A,\phi)$ is a left $(\mathcal{L},\alpha)$-module. If $\alpha=Id_L$, then hom-Lie- Rinehart algebra $(\mathcal{L},\alpha)$ is a Lie-Rinehart algebra $L$ over $A$ and $A$ is a left Lie-Rinehart algebra module over $L$. The cochain complex $(C^*(L,A),\delta)$ is same as the Lie-Rinehart algebra cochain complex except $0$-cochains and for $n\geq 2$ the $n^{th}$-cohomology space $H^n_{hLR}(L,A)$ of $(\mathcal{L},\alpha)$ with coefficients in $(A,\phi)$ is same as the $n^{th}$-cohomology space of Lie-Rinehart algebra with coefficients in the module $A$. Hence by Theorem \ref{Char1}, we get the characterisation of second Lie-Rinehart algebra cohomology space in terms of $A$-split abelian extensions, which is proved in Section 2, \cite{Hueb1}.

\subsection{hom-Lie algebroids}

We have defined the notion of a left module over a hom-Lie-Rinehart algebra. The algebraic properties of the left modules over hom-Lie-Rinehart algebra motivates the following definition of a representation of a hom-Lie algebroid as follows:

\begin{Def}
Let $\mathcal{A}:=(A\rightarrow M, [-,-],\alpha, \phi,\rho)$ be a hom-Lie algebroid over $M$. A representation of $\mathcal{A}$ on a vector bundle $E\rightarrow M $ is a triplet $(E\rightarrow M, \beta,\nabla^{\phi^*})$, where $\beta: \Gamma E\rightarrow \Gamma E$ is a linear map and $\nabla^{\phi^*}: \Gamma A\otimes \Gamma E\rightarrow \Gamma E$ is a bilinear map, given by $(x,s)\mapsto\nabla^{\phi^*}_x(s)$, satisfying the following properties:  
\begin{enumerate}
\item $\beta(f.s)=\phi^*(f).\beta(s)$;
\item $\nabla^{\phi^*}_{f.x}(s)=\phi^*(f).\nabla^{\phi^*}_x(s)$;
\item $\nabla^{\phi^*}_x(f.s)=\phi^*(f).\nabla^{\phi^*}_x(s)+ \rho(x)[f].\beta(s)$;
\item $(\nabla^{\phi^*},\beta)$ is a representation of hom-Lie algebra $(\Gamma A,[-,-],\alpha)$ on $\Gamma E$;
\end{enumerate}
for all $x\in \Gamma A,~s\in \Gamma E$ and $f\in C^{\infty}(M)$. 
\end{Def}

If $\alpha=Id_{\Gamma A}$, and $\beta=Id_{\Gamma E}$, then $\mathcal{A}$ is a Lie-algebroid and the triple $(E\rightarrow M, \beta,\nabla^{\phi^*})$ is the usual representation of a Lie-algebroid on the vector bundle $E\rightarrow M$.

For a hom-Lie algebroid $\mathcal{A}:=(A\rightarrow M, [-,-],\alpha, \phi,\rho)$, there is a canonical representation $(M\times \mathbb{R}\rightarrow M, \phi^*,\nabla^{\phi^*})$ on the trivial bundle $M\times \mathbb{R}\rightarrow M$, where $\nabla^{\phi^*}:\Gamma A\otimes C^{\infty}(M)\rightarrow C^{\infty}(M)$ is given by
$$\nabla^{\phi^*}_x(f)=\rho(x)[f]~\mbox{for any}~x\in \Gamma A,~ f\in C^{\infty}(M).$$
A coboundary operator for a hom-Lie algebroid with coefficients in it's representation on a vector bundle can be defined since any hom-Lie algebroid is a hom-Lie-Rinehart algebra. Now, let us first recall the definition of $(\sigma,\tau)$-differential graded commutative algebra  from \cite{OnhLie}.
\begin{Def}
Let $\mathfrak{A}=\oplus_{k\in\mathcal{Z}_+}\mathfrak{A}_k$ be a graded commutative algebra, $\sigma$ and $\tau$ be $0$-degree endomorphism of $\mathfrak{A}$, then a $(\sigma,\tau)$-differential graded commutative algebra is quadruple $(\mathfrak{A},\sigma,\tau,d)$, where $d$ is a degree $1$ square zero operator on $\mathfrak{A}$ satisfying the following:
\begin{enumerate}
\item $d\circ\sigma=\sigma\circ d,~ ~d\circ\tau=\tau\circ d;$
\item $d(ab)=d(a)\tau(b)+(-1)^{|a|}\sigma(a)d(b)$ for $a,b\in \mathfrak{A}$.
\end{enumerate}
 
\end{Def}
In \cite{New}, for a regular hom-Lie algebroid $\mathcal{A}$, we define a different cochain complex $(C^*(\mathcal{A};E),d_{A,E})$ with coefficients in a representation $(E\rightarrow M, \beta,\nabla^{\phi^*})$. In the case of the trivial representation of $\mathcal{A}$ on $M\times \mathbb{R}\rightarrow M$, the cochain complex $(C^*(\mathcal{A};M\times \mathbb{R}),d_{A,M\times \mathbb{R}})$ is a $(\tilde{\alpha},\tilde{\alpha})$-differential graded commutative algebra, where the map $\tilde{\alpha}:\Gamma(\wedge^n A^*)\rightarrow \Gamma(\wedge^{n}A^*)$ is defined as
$$\tilde{\alpha}(\xi)(x_1,\cdots,x_n)=\phi^*(\xi(\alpha^{-1}(x_1),\cdots,\alpha^{-1}(x_n)))$$
for $\xi\in \Gamma(\wedge^nA^*),~\mbox{and}~ x_i\in \Gamma A, ~\mbox{for}~ 1\leq i\leq n$.

Note that for a Lie algebroid ($\alpha=Id_{\Gamma A}$, and $\phi=Id_{M}$) this complex with trivial coefficients becomes the de Rham complex and the $(\tilde{\alpha},\tilde{\alpha})$-differential graded commutative algebra becomes the usual differential graded commutative algebra associated to a Lie algebroid. Furthermore, this differential $d_{A,M\times \mathbb{R}}$ allows us to define the notion of Lie derivative for hom-Lie algebroids. In \cite{hom-Lie1}, the authors have defined the notion of Lie derivative by obtaining a $(\sigma,\tau)$-differential graded commutative algebra associated to a regualar (or invertible) hom-Lie algebroid, but for a modified definition of a hom-Lie algebroid (originally given in \cite{hom-Lie}). Both definitions are equivalent in the invertible case, however  it is important to note that by Remark \ref{CompS} such modification has a disadvantage in the non-invertible case. In \cite{New}, the results in Section 5 show that the notion of Lie derivative can be defined without a modification.

\subsection{hom-Batalin-Vilkovisky algebras}
A Gerstenhaber algebra with an exact generator is called a \textbf{Batalin- Vilkovisky algebra} (see \cite{Hueb3,Xu} and references therein). In \cite{Hueb3}, J. Huebschmann relates Gerstenhaber structures and homology and cohomology of Lie-Rinehart algebras with new insights from the notion of a Batalin-Vilkovisky algebras. The discussion of such relation in hom-Lie-Rinehart context may be considered by exact generators of the associated hom-Gerstenhaber algebra.

In \cite{Xu}, Ping Xu establised a correspondence between various geometric structures on vector bundles and the algebraic structures such as Gerstenhaber algebras and Batalin-Vilkovisky algebras. In \cite{hom-Lie}, the authors given a canonical hom-Gerstenhaber algebra structure on the exterior algebra $(\mathfrak{G}=\wedge^*\mathfrak{g},\wedge,[-,-]_{\mathfrak{G}},\alpha_{\mathfrak{G}})$ associated to a hom-Lie algebra $(\mathfrak{g},[-,-],\alpha)$( see Example \ref{Hom-Ger2}). If we consider the boundary operator $d$ in the complex for a hom-Lie algebra ( with coefficients in the trivial module $R$ in \cite{HALG2}) then the map $d:\wedge^n\mathfrak{g}\rightarrow \wedge^{n-1}\mathfrak{g}$ is given by 
$$d(x_1\wedge\cdots\wedge x_n)=\sum_{1\leq i<j\leq n} (-1)^{i+j}[x_i,x_j]\wedge \alpha_G(x_1\wedge\cdots \hat{x_i}\wedge\cdots\wedge \hat{x_j}\wedge\cdots\wedge x_n)$$
for all $x_1,\cdots,x_n\in \mathfrak{g}$. In other words, $d:\mathfrak{G}\rightarrow \mathfrak{G}$ is a map of degree $-1$ such that $d^2=0$. More importantly, this operator $d$ generates the graded hom-Lie bracket $[-,-]_{\mathfrak{G}}$ in the following way:

\begin{equation}
[X,Y]_{\mathfrak{G}}=(-1)^{|X|}(d(XY)-(dX)\alpha_{\mathfrak{G}}(Y)-(-1)^{|X|}\alpha_{\mathfrak{G}}(X)(dY));
\end{equation}  
for $X,Y\in \mathfrak{G}$. We may take the operator $d$ as an exact generator of the hom-Gerstenhaber algebra $\mathfrak{G}$. This yields a hom-Gerstenhaber algebra with an exact generator which is a \textbf{\it hom-Batalin-Vilkovisky algebra}. We would like to elaborate more on hom-Lie-Rinehart, hom-Gerstenhaber and hom-Batalin-Vilkovisky algebras in a separate note. Also, by using the notion of generators of hom-Gerstenhaber algebras a homology may be associated to a hom-Lie algebroid which is analogous to that for Lie algebroid in \cite{Xu}. We have seen an example of hom-Gerstenhaber algebra with an exact generator canonically associated to a hom-Lie algebra.\\
\\
\textbf{Acknowledgements:}\\

We would like to thank the anonymous referee for various comments, which improved the presentation of the paper. In particular, we thank the referee for remark on Definition \ref{Remark} and definition of the center for hom-Lie-Rinehart algebras.  


\vspace{.25cm}
{\bf Ashis Mandal and  Satyendra Kumar Mishra}\\
 Department of Mathematics and Statistics,
Indian Institute of Technology,
Kanpur 208016, India.\\
e-mail: amandal@iitk.ac.in, ~~ satyendm@iitk.ac.in

\end{document}